
\documentclass{amsart}
\usepackage{amsmath,amsthm,amsfonts,amssymb,amscd,amsbsy,pgf,tikz,microtype}
\usepackage{caption,enumerate}
\usepackage{dsfont,lscape}

\usepackage[all]{xy}
\usepackage{hyperref}


\hypersetup{
    pdftoolbar=true,
    pdfmenubar=true,
    pdffitwindow=false,
    pdfstartview={FitH},
    pdftitle={},
    pdfauthor={},
    pdfsubject={},
    pdfkeywords={}
    pdfnewwindow=true,
    colorlinks=true, 
    linkcolor=blue,
    citecolor=blue,
    urlcolor=black,
}

\newcommand{\dd}{\mathrm{d}}

\newcommand{\triv}{{\rm{triv}}}

\newcommand{\Ric}{\operatorname{Ric}}

\newcommand{\Btriv}{\mathcal B_{\mathrm{triv}}}

\newcommand{\N}{\mathds N}
\newcommand{\Z}{\mathds Z}
\newcommand{\R}{\mathds R}
\newcommand{\C}{\mathds C}

\renewcommand{\S}{\mathsf{S}}
\newcommand{\Ss}{\mathds{S}}
\newcommand{\T}{\mathsf{T}}

\newcommand{\g}{\mathrm g}

\allowdisplaybreaks

\newtheorem{theorem}{Theorem}[]
\newtheorem{lemma}[theorem]{Lemma}
\newtheorem{proposition}[theorem]{Proposition}

\newtheorem*{prob*}{\sc Problem}

\newtheorem*{mainthm*}{\sc Theorem}
\newtheorem*{maincor*}{\sc Corollary}

\theoremstyle{definition}

\theoremstyle{remark}
\newtheorem{remark}[theorem]{Remark}

\title{Bifurcations of Clifford tori in ellipsoids}

\subjclass{53A10, 53C42, 58J55, 34C23, 35B32, 49Q05}

\author[R. G. Bettiol]{Renato G. Bettiol}
\address[R. G. Bettiol]{\newline
\indent \!\!\!\begin{tabular}{lll}
CUNY Lehman College & & CUNY Graduate Center \\
Department of Mathematics & & Department of Mathematics \\
250 Bedford Park~Blvd W & & 365 Fifth Avenue \\
Bronx, NY, 10468, USA & & New York, NY, 10016, USA
\end{tabular}
}
\email{r.bettiol@lehman.cuny.edu}

\author[P. Piccione]{Paolo Piccione}
\address[P. Piccione]{\newline
\indent Great Bay University \newline
\indent Department of Mathematics \newline
\indent School of Sciences \newline
\indent Dongguan, Guangdong 523000, China}

\address{\emph{Permanent address:} \newline
\indent Universidade de S\~ao Paulo \newline
\indent Departamento de Matem\'atica \newline
\indent Rua do Mat\~ao, 1010 \newline
\indent S\~ao Paulo, SP, 05508-090, Brazil}
\email{piccione@ime.usp.br}

\allowdisplaybreaks
\numberwithin{equation}{section}
\numberwithin{theorem}{section}

\date{\today}

\begin{document}

\begin{abstract}
We prove that $3$-dimensional ellipsoids invariant under a $2$-torus action contain infinitely many distinct immersed minimal tori, with at most one exception. These minimal tori bifurcate from the $2$-torus orbit of largest volume at a dense set of eccentricities, and remain invariant under a circle.
\end{abstract}

\maketitle

\section{Introduction}

Despite recent spectacular advances in the existence theory of minimal surfaces, e.g., obtained via min-max theory and doubling constructions, many fundamental questions remain unanswered. In particular, while min-max theory applies broadly, yielding infinitely many embedded minimal surfaces on closed manifolds~\cite{song}, there is usually not much control on the topology of these minimal surfaces. For example, it is still unknown if the set of embedded minimal surfaces of a given genus $g\geq2$ in the round $\Ss^3$ is finite up to congruence, a question raised by Yau~\cite{yau-prob}. For genus $g=0$ and $g=1$, there is only one such minimal surface up to congruence, as shown by Almgren~\cite{almgren} and Brendle~\cite{brendle}, respectively. Recently, Ketover~\cite{ketover} proved that the number of such minimal surfaces diverges as $g\nearrow+\infty$. Another intriguing open question is whether every metric on $\Ss^3$ admits at least 5 embedded minimal tori, as conjectured by White~\cite{white-tori}, who verified this for almost round metrics. In the same paper, White showed that metrics on $\Ss^3$ with $\Ric\succ0$ carry at least one such torus. 
Significant progress towards proving White's conjecture has been recently announced in \cite{chu-li,li-wang}.

A natural approach to produce minimal surfaces with prescribed topology is to use bifurcation theory. Namely, given a $1$-parameter family $\Sigma_a$ of minimal surfaces in $(M^3,\g_a)$, a jump in the Morse index of $\Sigma_a$ as $a$ crosses a degeneracy instant $a_*$ generally corresponds to the existence of a bifurcation branch of other (isotopic, but noncongruent) minimal surfaces in $(M^3,\g_a)$ for $a$ near $a_*$. Under some conditions, this branch is not only local, but global, i.e., continues to exist for $a$ far from $a_*$.

The typical setup to implement this approach is given by deformations $(M^3,\g_a)$ of manifolds with many symmetries, which retain a family of minimal surfaces $\Sigma_a$. For instance, ellipsoidal deformations $(\Ss^3,\g_a)\subset \R^4$ of the round sphere retain 4 ``planar'' minimal spheres given by the intersection with a coordinate hyperplane in~$\R^4$. If the minimal spheres $\Sigma_a$ in one of these families are rotationally invariant, then they bifurcate into arbitrarily many \emph{nonplanar} pairwise noncongruent minimal spheres as the ellipsoid $(\Ss^3,\g_a)$ becomes sufficiently elongated~\cite{ellipsoids}. Similarly, certain ellipsoidal deformations retain a family of minimal tori. Namely, consider 
\begin{equation*}
\Ss^3(a,b):=\left\{(z,w)\in\C^2: \frac{|z|^2}{a^2}+\frac{|w|^2}{b^2}=1\right\},
\end{equation*}
with the isometric action of the torus $\T^2=\S^1 \times \S^1$, where each factor $\S^1\times \{1\}$ and $ \{1\}\times \S^1$ acts via multiplication by unit complex numbers, on $z$ and $w$, respectively. In view of the case $a=b$ of the round sphere, we shall refer to the ``middle'' $\T^2$-orbit
\begin{equation*}
    \Sigma(a,b):=\left\{(z,w)\in \Ss^3(a,b) :\frac{|z|^2}{a^2}=\frac{|w|^2}{b^2}=\frac12 \right\},
\end{equation*}
as the \emph{Clifford torus} in $\Ss^3(a,b)$. 
For all $a,b>0$, the torus $\Sigma(a,b)$ has maximal volume among principal $\T^2$-orbits in $\Ss^3(a,b)$, and is hence a minimal surface. 

In this paper, we investigate minimal tori in $\Ss^3(a,b)$ that bifurcate from $\Sigma(a,b)$, or from one of its  finite coverings, as the ratio $a/b$ varies. Without loss of generality, as minimal surfaces in $\Ss^3(a,b)$ can be identified with those in $\Ss^3(\lambda a,\lambda b)$ for all $\lambda>0$, we henceforth fix $b=1$, and use $a>0$ as the ``eccentricity'' parameter. To simplify notation, we write $\Ss^3_a:=\Ss^3(a,1)$ and $\Sigma_a:=\Sigma(a,1)$. Our main result~is:

\begin{mainthm*}
For $a_*\in \mathfrak b=\{q/\sqrt{4-q^2} : q\in\mathds Q\cap (0,2) \}$, which is dense in $(0,+\infty)$, 
a bifurcation branch of $\S^1$-invariant immersed minimal tori in $\Ss^3_a$ stems from the Clifford torus $\Sigma_{a_*}$. Each such branch persists for all $a$ up to $0$ or $+\infty$, and different branches contain pairwise noncongruent tori in $\Ss^3_a$. The bifurcation branch that issues at $a_*=\frac{1}{\sqrt3}$ contains only embedded minimal tori, and persists until $a=0$.
\end{mainthm*}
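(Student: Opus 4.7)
The strategy combines an $\S^1$-equivariant reduction of the minimal surface equation to ODE, a spectral analysis of the Jacobi operator on $\Sigma_a$ locating the bifurcation instants $\mathfrak b$, and a global continuation argument with a topological invariant distinguishing the branches. I would begin by exploiting the residual $\S^1$-symmetry: tori invariant under one $\S^1\subset\T^2$ factor project to curves in the $2$-dimensional orbifold quotient $\Ss^3_a/\S^1$, and the minimality condition translates into a geodesic equation for a conformally weighted metric on this quotient (in the spirit of Hsiang--Lawson), where the weight records the length of the fiber $\S^1$-orbit. In this picture, $\Sigma_a$ descends to a distinguished critical geodesic of the orbit-length function, and the bifurcation problem becomes a $1$-parameter family of ODE boundary-value problems.

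Next I would linearize and compute the spectrum. Since $\Sigma_a$ is a flat torus, its Jacobi operator $J_a$ restricted to $\S^1$-invariant normal variations diagonalizes in Fourier modes $e^{\mathrm i n\theta}$ along the remaining circle factor, producing eigenvalues $\lambda_n(a)$ expressible in closed form. Solving $\lambda_n(a)=0$ should produce exactly $\mathfrak b$: in terms of the dimensionless parameter $q=2a/\sqrt{1+a^2}\in(0,2)$, the $n$th mode degenerates precisely when $q$ is rational, giving $a_*=q/\sqrt{4-q^2}$. Verifying transversality $\frac{\dd}{\dd a}\lambda_n(a_*)\neq 0$ and invoking an equivariant variational bifurcation theorem (Smoller--Wasserman, or Crandall--Rabinowitz in the $\S^1$-invariant sector) then yields, at each $a_*\in\mathfrak b$, a local branch $\Sigma_a^{(n)}$ of $\S^1$-invariant minimal tori issuing from $\Sigma_{a_*}$.

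To globalize, I would apply Rabinowitz's continuum alternative to each local branch inside an appropriate Banach space of $\S^1$-invariant immersed tori: each branch is a connected continuum that either (a) extends up to $a\in\{0,+\infty\}$, (b) returns to another bifurcation instant, or (c) escapes via loss of regularity. Outcome (a) is the desired conclusion; (b) is ruled out by a conserved count of intersections of the quotient curve with the reference geodesic, which separates distinct modes $n$ and is preserved along each smooth branch; (c) is precluded by a priori estimates for the reduced ODE on any compact subinterval of $(0,+\infty)$. The same intersection-count invariant delivers the asserted pairwise noncongruence of tori belonging to different branches.

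Finally, for $a_*=1/\sqrt 3$ (the first mode $n=1$, $q=1$), the bifurcating quotient curves have a single sign change relative to the reference geodesic, so locally they are graphs over a sector of the orbifold; I would then argue that this graph property propagates along the branch for all $a\in(0,1/\sqrt 3]$ by monotonicity of the reduced Hamiltonian flow, forcing the associated minimal tori to remain embedded down to $a=0$. The main obstacle I anticipate is executing the global alternative rigorously: choosing a single invariant that simultaneously separates distinct branches, forbids finite loops, and survives the degenerations as $a\to 0^+$ or $a\to+\infty$ will require a careful setup of the function space and of the compactness arguments for the reduced ODE.
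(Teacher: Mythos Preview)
Your overall strategy matches the paper's almost exactly: Hsiang--Lawson reduction to geodesics on a weighted $2$-disk, Jacobi/Fourier analysis to locate $\mathfrak b$, Crandall--Rabinowitz for local bifurcation, Rabinowitz's global alternative, and an intersection-count invariant (number of crossings with the Clifford geodesic, plus winding number around the center) to separate branches and rule out reattachment. Your identification of $q=2a/\sqrt{1+a^2}$ and the transversality check are on target, and your ``a priori estimates for the reduced ODE'' corresponds to the paper's properness argument (geodesics approaching the boundary or the center would have the wrong intersection count).

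There is, however, a genuine gap in your final paragraph. You assert that the $n=1$ branch stays in $(0,1/\sqrt3]$ and goes to $a=0$ by ``monotonicity of the reduced Hamiltonian flow,'' but this is not substantiated and the paper does \emph{not} argue this way. Rabinowitz only tells you the branch is noncompact; a priori it could head to $+\infty$. The paper's decisive input here is \emph{Brendle's proof of the Lawson conjecture}: since every torus in $\mathcal B_{(1,1)}$ is embedded, the branch cannot cross $a=1$, for that would produce an embedded minimal torus in the round $\Ss^3$ not congruent to the Clifford torus. With $a=1$ as a barrier, noncompactness then forces the branch toward $a=0$. Without invoking Brendle (or an equivalent rigidity statement at $a=1$), your argument does not exclude the branch escaping to $+\infty$, and no local monotonicity property of the geodesic flow in a single $\Omega_a$ can control how the branch moves in the parameter $a$.

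A secondary point: your embeddedness argument via ``graph property propagates'' is more delicate than necessary. In the paper's setup, embeddedness is automatic for the entire $n=1$ branch because nonradial geodesics have monotone angular coordinate $\theta$; a geodesic that closes up after $\theta$ sweeps exactly $2\pi$ is therefore simple by construction. This is what makes the Brendle barrier applicable along the whole branch, not just locally near the bifurcation point.
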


Many geometric properties of these minimal tori in $\Ss^3_a$ can be inferred from the instant at which they bifurcate from $\Sigma_a$. Given $q\in\mathds Q\cap (0,2)$, let $0<j<2k$ be the unique integers such that $\gcd(j,k)=1$ and $q=j/k$, and let $\mathcal B_{(j,k)}$ be the branch of minimal tori that bifurcate from $\Sigma_a$ at $a_* = q/\sqrt{4-q^2}$. (More precisely, the tori in $\mathcal B_{(j,k)}$ bifurcate from a $k$-fold covering of $\Sigma_a$.) 
Each minimal immersion $\T^2\hookrightarrow \Ss^3_a$ in $\mathcal B_{(j,k)}$ maps the parallel circles $\{e^{i\theta}\}\times \S^1\subset\T^2$ into $\{1\}\times \S^1$-orbits in~$\Ss^3_a$. Tori in $\mathcal B_{(j,k)}$, other than $\Sigma_{a_*}$, intersect $\Sigma_a$ along the image of $2j$ such circles, and have linking number $k$ with the closed geodesic $\{(0,w) \in \Ss^3_a : |w|=1\}$. This implies that the branches $\mathcal B_{(j,k)}$ are pairwise disjoint, see Proposition~\ref{prop:disjoint} for details. All tori in $\mathcal B_{(1,1)}$ are embedded, but none of the tori in $\mathcal B_{(j,k)}$ with $(j,k)\neq (1,1)$ are embedded. The latter are (Alexandrov) immersed and self-intersect along the image of $k-1$ circles contained in the $2$-sphere $\Ss^3_a \cap\{\operatorname{Im}z=0\}$, which are $\{1\}\times\S^1$-orbits.

Owing to our $\S^1$-invariant setup, all branches $\mathcal B_{(j,k)}$ can be realized as closed connected subsets of the strip $\{(a,s)\in (0,+\infty)\times (-1,1)\}$, where $s\in (-1,1)$ parametrizes $\{1\}\times\S^1$-invariant minimal immersions $\R\times \S^1 \hookrightarrow \Ss^3_a$ that coincide with the Clifford torus $\Sigma_a$ if $s=0$, close up as tori if $(a,s)\in \mathcal B_{(j,k)}$ for some $j,k$, and degenerate to the totally geodesic $2$-sphere $\Ss^3_a\cap \{\operatorname{Re} z =0\}$ as $s\searrow -1$, and to the closed geodesic $\{(z,0) \in \Ss^3_a : |z|=a\}$ as $s\nearrow1$. Using this, we prove that $\mathcal B_{(j,k)}$ are noncompact and contain points $(a,s)$ with $a$ arbitrarily close to $0$ or $+\infty$. 

The unit round sphere $\Ss^3_1$ contains infinitely many pairwise noncongruent immersed minimal tori which are $\S^1$-invariant, first described by \^Otsuki~\cite{otsuki}, see also \cite[Thm.~8]{hsiang-lawson} and \cite[Thm.~1.4]{brendle-survey}. This fact can be generalized to the ellipsoids $\Ss^3_a$ as a consequence of the above Theorem:

\begin{maincor*}
The ellipsoid $\Ss^3_a$ contains infinitely many pairwise noncongruent $\S^1$-invariant immersed minimal tori for all $a \in (0,+\infty)$ except possibly for one value in $\big(\frac{1}{\sqrt3},+\infty\big)$. 
If $a\in \big(0,\frac{1}{\sqrt3}\big)$, then at least one of these minimal tori is embedded and not congruent to the Clifford torus $\Sigma_a$.
\end{maincor*}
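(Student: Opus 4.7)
The plan is to deduce the Corollary from the Main Theorem via a density-and-connectedness counting argument. By that theorem, each branch $\mathcal{B}_{(j,k)}$ is a closed connected subset of the strip $(0,+\infty)\times(-1,1)$, its $a$-projection contains $a_*$ and extends to $0$ or $+\infty$, and tori from different branches are pairwise noncongruent in each $\Ss^3_a$. Consequently, a lower bound for the number of noncongruent $\S^1$-invariant immersed minimal tori in $\Ss^3_a$ is the number of branches whose $a$-projection contains $a$. The first assertion thus reduces to showing this count is infinite for every $a\in(0,+\infty)$ save possibly one value in $(1/\sqrt{3},+\infty)$.

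The key step is: \emph{at most one value $a_0\in(0,+\infty)$ is crossed by only finitely many branches.} Suppose to the contrary that two such values $a_0^{(1)}<a_0^{(2)}$ existed. Density of $\mathfrak{b}$ in $(0,+\infty)$ provides infinitely many $a_*\in\mathfrak{b}\cap(a_0^{(1)},a_0^{(2)})$. For each such $a_*$, the corresponding branch $\mathcal{B}_{(j,k)}$ is connected and contains $a_*$ in its $a$-projection, which extends to $0$ or $+\infty$; hence the $a$-projection contains either $(0,a_*]\ni a_0^{(1)}$ or $[a_*,+\infty)\ni a_0^{(2)}$. Thus infinitely many branches cross $\{a_0^{(1)}\}\times(-1,1)$ or $\{a_0^{(2)}\}\times(-1,1)$, contradicting the assumption.

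To confine the possible exception to $(1/\sqrt{3},+\infty)$, I would invoke the persistence of $\mathcal{B}_{(1,1)}$ all the way down to $a=0$: its $a$-projection contains $(0,1/\sqrt{3}]$, providing at least one crossing branch for every $a\in(0,1/\sqrt{3}]$. To upgrade this single branch to infinitely many, I would argue that infinitely many other branches $\mathcal{B}_{(j,k)}$ with bifurcation parameter $a_*\in\mathfrak{b}\cap(a_0,1/\sqrt{3}]$ must also extend to $a=0$; if on the contrary infinitely many of them extended only to $+\infty$, then each would start at $(a_*,0)$ with $a_*<1/\sqrt 3$ and have to exit the region $a\le 1/\sqrt 3$ without meeting $\mathcal{B}_{(1,1)}$, yielding infinitely many pairwise-disjoint connected sets forced into a ``barrier'' configuration by $\mathcal{B}_{(1,1)}$, which I would aim to rule out by a topological obstruction in the strip. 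Establishing this topological step --- and thereby excluding an exceptional $a_0\le 1/\sqrt{3}$ --- is the main obstacle in the proof.

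For the second assertion, the branch $\mathcal{B}_{(1,1)}$ consists entirely of embedded minimal tori, its $a$-projection contains $(0,1/\sqrt{3}]$, and it meets the Clifford locus $\{s=0\}$ only at its bifurcation point $(1/\sqrt{3},0)$ (since $\{s=0\}$ parametrizes Clifford tori and, by disjointness, each branch crosses $\{s=0\}$ only at its own bifurcation point). Hence for every $a\in(0,1/\sqrt{3})$, the torus of $\mathcal{B}_{(1,1)}$ at parameter $a$ is embedded, lies at some $s\neq 0$, and is therefore not congruent to $\Sigma_a$.
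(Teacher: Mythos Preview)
Your density-and-pigeonhole argument for ``at most one exceptional value'' is correct and is a genuinely different (and cleaner) route than the paper's. The paper never isolates this statement; instead it argues directly that the bifurcation instants split into a lower set whose branches go to $0$ and an upper set whose branches go to $+\infty$, using the planar separation (barrier) argument twice. Your approach extracts the ``at most one'' conclusion from connectedness and density alone, which is a nice simplification.

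However, the gap you identify is real and is exactly where the work lies. To confine the possible exception to $(1/\sqrt3,+\infty)$ you must show that \emph{every} branch with $a^j_k<1/\sqrt3$ goes to $0$, and this is precisely the barrier argument the paper invokes (tersely, citing only Proposition~3.5). Concretely: the curves $t\mapsto(a^j_k(t),s^j_k(t))$, $t>0$, lie in $\{s>0\}$ and are pairwise disjoint; since $\mathcal B^+_{(1,1)}$ joins $(1/\sqrt3,0)$ to $a\to0$, a single branch $\mathcal B^+_{(j,k)}$ with $a^j_k<1/\sqrt3$ going to $a\to+\infty$ would have to cross it, by a standard planar separation lemma (two arcs in a half-plane issuing from distinct boundary points and escaping to opposite ends must intersect). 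This already gives a contradiction for \emph{one} such branch, so your ``infinitely many forced into a barrier configuration'' is more than you need. Once you supply this separation step, every $a\in(0,1/\sqrt3)$ is crossed by all branches with $a<a^j_k<1/\sqrt3$, of which there are infinitely many, and the rest of your outline goes through.

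One minor correction in your last paragraph: the fact that $\mathcal B_{(1,1)}$ meets $\{s=0\}$ only at $(1/\sqrt3,0)$ is not a consequence of branch-to-branch disjointness (Proposition~3.5), but of the parametrization established in the Theorem's proof (namely $s^j_k(t)=0$ iff $t=0$), or equivalently of Proposition~3.4(ii) combined with Proposition~3.5 to rule out reattachment at another bifurcation instant.
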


The value $a=\frac{1}{\sqrt3}$ has a special role because tori in $\mathcal B_{(1,1)}$ are embedded. Namely, $\mathcal B_{(1,1)}$ cannot enter the region  $a>1$, otherwise it would yield an embedded minimal torus in the round sphere $\Ss^3_1$ not congruent to the Clifford torus, in contradiction with Brendle's proof~\cite{brendle} of the Lawson conjecture. In other words, $a=1$ is a barrier for $\mathcal B_{(1,1)}$, and $\mathcal B_{(1,1)}$ is then itself a barrier for all $\mathcal B_{(j,k)}$ that bifurcate at $a_*<\frac{1}{\sqrt3}$. Thus, these are nested in the strip $\{(a,s)\in (0,+\infty)\times (-1,1)\}$, and so contain points with $a$ arbitrarily close to $0$. 
Either this is the case for all branches (which is natural to expect), or else a branch bifurcating at some $a_*>\frac{1}{\sqrt3}$, and hence all subsequent branches, do not contain points with $a$ close to $0$, but instead have points with arbitrarily large $a$. If the latter occurs, in principle, there could be (at most) one value of $a\in \big(\frac{1}{\sqrt3},+\infty\big)$ for which $\{a\}\times (-1,1)$ does not intersect infinitely many branches $\mathcal B_{(j,k)}$. Because of this possibility, which we cannot currently rule out, the above Corollary states \emph{except possibly for one value in} $\big(\frac{1}{\sqrt3},+\infty\big)$.

Our analysis also yields a local uniqueness counterpart to the above Theorem. Given $k\in \N$, let $\mathfrak b_{k}:=\{(j/k)/\sqrt{4-(j/k)^2} : 0<j<2k,\, \gcd(j,k)=1 \}$, and note that $\mathfrak b=\bigcup_{k\in\N}\mathfrak b_{k}$. If $a\notin \mathfrak b_{k}$, then the $k$-fold covering of the Clifford torus $\Sigma_a$ is locally unique among $\{1\}\times\S^1$-invariant immersed minimal tori in $\Ss^3_a$ up to congruence. In particular, since $\mathfrak b_{1}=\{\frac{1}{\sqrt3}\}$, we conclude that the Clifford torus $\Sigma_a$, $a\neq \frac{1}{\sqrt3}$, is locally unique among $\{1\}\times\S^1$-invariant embedded minimal tori in $\Ss^3_a$ up to congruence. For details, see Proposition~\ref{prop:loc-bif} (ii).

We remark that the round sphere $\Ss^3_1$ also contains many immersed minimal tori that are \emph{not} $\S^1$-invariant. In fact, Hitchin~\cite{hitchin} discovered an explicit correspondence between immersed minimal tori in $\Ss^3_1$ and ``spectral data'', which are certain hyperelliptic curves endowed with a line bundle and two meromorphic differentials with integer periods. Using this correspondence and integrable systems techniques, relatively explicit examples of such tori in $\Ss^3_1$ were produced by Carberry~\cite{carberry}.

Beyond ellipsoidal deformations, there are other natural families of deformations of the round $\Ss^3$ that retain several minimal surfaces, and are hence well-suited for the bifurcation approach put forth in this paper. For instance, Berger spheres $\Ss^3(\tau)$, $\tau>0$, are homogeneous deformations of the round sphere $\Ss^3(1)$ obtained scaling the Hopf circles $\S^1\to \Ss^3\to \C P^1$ by $\tau>0$. Their isometry group $\mathsf U(2)$ contains a torus $\T^2$ that acts by cohomogeneity one, and the $\T^2$-orbit of maximal volume in $\Ss^3(\tau)$ is an embedded minimal torus, which coincides with the Clifford torus for $\tau=1$. These minimal tori bifurcate infinitely many times as $\tau\nearrow+\infty$, as shown in \cite{LLP}, though in a local ``cluster-point'' sense that is weaker than the one used here. Embedded minimal tori in $\Ss^3(\tau)$ not congruent to the Clifford torus have also been found in~\cite{torralbo}. 
On the other hand, some families of minimal surfaces never bifurcate; e.g., as $\Ss^3(\tau)$ are homogeneous, they each admit a \emph{unique} (up to congruence) immersed minimal $2$-sphere~\cite{mmpr2}.

This paper is organized as follows. In Section~\ref{sec:setup}, we use the symmetry reduction method of Hsiang--Lawson~\cite{hsiang-lawson} to identify $\{1\}\times\S^1$-invariant minimal tori in $\Ss^3_a$ and closed geodesics on a certain Riemannian $2$-disk $\Omega_a$ with singular boundary. The second variation of (iterates of) the closed geodesic $\gamma_{a,0}$ in $\Omega_a$ that corresponds to the Clifford torus $\Sigma_a$ is analyzed in Section~\ref{sec:bif}. We restrict to perturbations of $\gamma_{a,0}$ that are invariant under a reflection in $\Omega_a$ to avoid multiplicities issues, allowing us to use the simple eigenvalue bifurcation theorem of Crandall--Rabinowitz~\cite{crandall-rabinowitz} for the local result (Proposition~\ref{prop:loc-bif}). Combining this local result with the discrete-valued invariant in Proposition~\ref{prop:disjoint} and the global bifurcation theorem of Rabinowitz~\cite{rabinowitz}, we prove the Theorem and Corollary stated above.

\subsection{Acknowledgements}
The first-named author was supported by the National Science Foundation CAREER grant DMS-2142575. The second-named author was supported by grants from CNPq and Fapesp (2022/16097-2, 2022/14254-3), Brazil.

\section{Geometric setup}
\label{sec:setup}

\subsection{Symmetry reduction}
Consider the isometric $\S^1$-action on $\Ss^3_a\subset \C^2$ given by complex multiplication in the second coordinate, i.e., the action of $\{1\}\times\S^1\subset\T^2$. The orbit through $(z,w)\in \Ss^3_a$ is a circle of radius $r=|w|=\sqrt{1-\frac{|z|^2}{a^2}}$ if $|w|>0$, and a fixed point if $|w|=0$. Moreover, the orbit space is isometric to
\begin{equation}\label{eq:quotient}
\Ss^3_a/(\S^1) = \left\{ (z,r)\in\C\times\R : \frac{|z|^2}{a^2}+ r^2 =1, \, r\geq 0 \right\},
\end{equation}
with boundary $\partial(\Ss^3_a/\S^1)=\{(z,0)\in \Ss^3_a/ \S^1\}$ given by the circle of $\S^1$-fixed points in $\Ss^3_a$, and the volume function of $\S^1$-orbits is given by
\begin{equation}\label{eq:vol-fct}
V\colon \Ss^3_a/ \S^1\longrightarrow\R, \quad V(z,r)=2\pi r=2\pi \sqrt{1- |z|^2/a^2}.
\end{equation}

Consider the Riemannian $2$-disk endowed with the conformal metric
\begin{equation*}
\Omega_{a}:=\big(\Ss^3_a/ \S^1, V^2\, \check{\g}_{a}\big),
\end{equation*}
where $\check{\g}_{a}$ is the Riemannian metric of \eqref{eq:quotient}.
The symmetry reduction procedure of Hsiang--Lawson~\cite{hsiang-lawson}, cf.~\cite[Prop.~3.1]{ellipsoids}, readily implies the following:

\begin{proposition}\label{prop:reduction}
An $\{1\}\times\S^1$-invariant surface $\Sigma$ in $\Ss^3_a$ is minimal if and only if its quotient $\Sigma/\S^1$ is a geodesic in $\Omega_{a}$. In particular, $\Sigma$ is an $\{1\}\times\S^1$-invariant minimal torus in $\Ss^3_a$ if and only if $\Sigma/\S^1$ is a closed geodesic in the interior of $\Omega_{a}$, and $\Sigma$ is embedded if and only if $\Sigma/\S^1$ is embedded (i.e., a simple closed geodesic).
\end{proposition}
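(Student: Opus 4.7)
The plan is to derive this as a direct application of the Hsiang--Lawson symmetry reduction principle, following the template of \cite[Prop.~3.1]{ellipsoids}. The strategy rests on two ingredients: a coarea-type identity expressing the area of an $\S^1$-invariant surface as a weighted length of its quotient curve in $\Ss^3_a/\S^1$, and the conformal reinterpretation of that weighted length as the intrinsic length in the metric $V^2\,\check{\g}_a$ defining $\Omega_a$.

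First, I would fix an $\{1\}\times\S^1$-invariant surface $\Sigma\subset \Ss^3_a$ contained in the principal part $\{|w|>0\}$ and set $\gamma:=\Sigma/\S^1$, a curve in the interior of $\Ss^3_a/\S^1$. Integrating along the circle fibers of the orbit projection $\Ss^3_a\to\Ss^3_a/\S^1$ yields
\begin{equation*}
\operatorname{Area}(\Sigma) \;=\; \int_{\gamma} V \, \dd s_{\check{\g}_a},
\end{equation*}
with $V$ as in \eqref{eq:vol-fct}. Under the conformal rescaling $\check{\g}_a \mapsto V^2\,\check{\g}_a$, arc length scales by the factor $V$, so the right-hand side is precisely the length of $\gamma$ measured in the metric of $\Omega_a$. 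Thus $\S^1$-equivariant variations of $\Sigma$ are in bijection with variations of $\gamma$ in $\Omega_a$, and the two variational problems have the same critical points. By Palais's principle of symmetric criticality, which applies because the $\S^1$-action preserves the area functional, $\Sigma$ is minimal if and only if it is critical among equivariant variations, which happens exactly when $\gamma$ is a geodesic in $\Omega_a$.

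The torus-specific statements are then topological consequences. A smooth $\{1\}\times\S^1$-invariant torus $\Sigma$ cannot contain an $\S^1$-fixed point, since an orbit collapsing to a point would be incompatible with the smooth torus structure along that orbit; therefore $\gamma$ is a smoothly immersed closed curve in the \emph{interior} of $\Omega_a$, and conversely every closed geodesic in the interior lifts to a smoothly immersed invariant torus. Embeddedness of $\Sigma$ is equivalent to embeddedness of $\gamma$ because the orbit projection is a submersion with connected fibers on the principal part, so self-intersections of $\Sigma$ correspond bijectively to self-intersections of $\gamma$.

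I expect the main technical subtlety to be the smoothness of the lift near the fixed-point boundary of $\Omega_a$: one must justify that for tori, the quotient curve avoids $\partial(\Ss^3_a/\S^1)$ outright, so the degeneracy of the conformal factor $V$ at the boundary never enters the analysis. Once this is established, the core argument reduces to standard first-variation calculations for the length functional of a Riemannian surface, and the rest follows transparently from the formulas above.
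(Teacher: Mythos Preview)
Your proposal is correct and follows exactly the approach the paper indicates: the paper does not give a self-contained proof but simply cites the Hsiang--Lawson symmetry reduction procedure and \cite[Prop.~3.1]{ellipsoids}, which is precisely the coarea-plus-conformal-rescaling argument you have spelled out. Your version is a faithful expansion of what the paper leaves implicit, with the same references and the same logical structure.
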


\subsection{\texorpdfstring{Geodesics in $\Omega_{a}$}{Geodesics in the renormalized orbit space}}
Since the isometric actions on $\Ss^3_a$ of the circle subgroups $\S^1\times \{1\}$ and $\{1\}\times\S^1$ of $\T^2$ commute, and \eqref{eq:vol-fct} is $\S^1\times \{1\}$-invariant, it follows that the metric $V^2\, \check{\g}_{a}$ of $\Omega_{a}$ is invariant with respect to the induced $\S^1\times \{1\}$-action. 
In order to write this metric in polar coordinates $(\rho,\theta)$ as $\dd \rho^2+\varphi(\rho)^2\dd\theta^2$, we  parametrize the orbit space \eqref{eq:quotient} with $X(\phi,\theta)=\big(x(\phi,\theta),y(\phi,\theta),r(\phi,\theta)\big)$, where
\begin{equation*}
    x(\phi,\theta)=a\cos\theta\sin\phi, \quad y(\phi,\theta)=a\sin\theta\sin\phi, \quad r(\phi,\theta)=\cos\phi,
\end{equation*}
and $\phi\in \big[0,\frac\pi2\big], \, \theta\in[0,2\pi]$. Thus, from \eqref{eq:vol-fct}, we have that the metric of $\Omega_{a}$ is
\begin{equation}\label{eq:conf-metric-polarcoord}
\begin{aligned}
  V^2\, \check{\g}_{a}&=4\pi^2\cos^2\phi\,(\dd x^2+\dd y^2 +\dd r^2)   \\
  &=4\pi^2\cos^2\phi\,\big((a^2\cos^2\phi+\sin^2\phi)\,\dd\phi^2 + a^2\sin^2\phi\,\dd\theta^2\big)\\
  &=\dd\rho^2+ \varphi(\rho)^2\dd\theta^2,
\end{aligned}
\end{equation}
where 
\begin{equation}\label{eq:drho-varphi}
\dd\rho=2\pi \cos(\phi)\sqrt{a^2\cos^2\phi+\sin^2\phi}\,\dd\phi, \;\; \text{and} \;\; \varphi(\rho)=\pi a \sin(2\phi(\rho)).    
\end{equation}
Here, $\phi(\rho)$ is the inverse of the arclength function $\rho\colon\left[0,\frac\pi2\right]\to [0,L_a]$, given by $\rho(\phi)=2\pi \int_0^\phi \cos(\xi)\sqrt{a^2\cos^2\xi+\sin^2\xi}\,\dd\xi$, with $L_a=\rho(\frac\pi2)$. 
Clearly, $\varphi(\rho)>0$ for all $\rho \in (0,L_a)$, and $\varphi(0)=\varphi(L_a)=0$. Since $\rho'(0)=2\pi a$ and $\lim\limits_{\phi\nearrow \frac\pi2}\rho'(\phi)=0$, we have that $\varphi'(0)=1$ and $\lim\limits_{\rho\nearrow L_a}\varphi'(\rho)=-\infty$, corresponding to the fact that \eqref{eq:conf-metric-polarcoord} is smooth at the central point $O=\{\rho=0\}$ but singular at $\partial \Omega_{a}=\{\rho=L_a\}$.

\subsubsection{Geodesic equation}
A routine computation with \eqref{eq:conf-metric-polarcoord} shows that the curve $\gamma(t)=(\rho(t),\theta(t))$ is a geodesic in $\Omega_{a}$ if and only if it satisfies the system of ODEs
\begin{equation}\label{eq:geod-eqn}
\ddot\rho -\varphi(\rho) \varphi'(\rho)\,\dot\theta^2=0 \quad \text{and}\quad 
    \ddot\theta+2 \frac{ \varphi'(\rho)}{\varphi(\rho)}\,\dot\rho\,\dot\theta =0.
\end{equation}
Integrating the second equation above yields a conserved quantity, whose constant value depends on the geodesic $\gamma$, given by
\begin{equation}\label{eq:conserved}
\dot\theta\,\varphi(\rho)^2=c(\gamma),
\end{equation}
which corresponds to the fact that $\frac{\partial}{\partial \theta}$ is a Killing field in $\Omega_{a}$.
Thus, along a geodesic, $\theta$ is either \emph{constant} (radial geodesic) or \emph{monotonic}. This allows us to reparametrize nonradial geodesics as $\gamma(\theta)=(\rho(\theta),\theta)$, though we shall not make use of this fact.

For each $\theta_*\in [0,2\pi)$, we define the \emph{radial geodesic segment} issuing from $O$,
\begin{equation}\label{eq:radialgeod}
\sigma(\theta_*):=\{ (\rho,\theta_*)\in\Omega_{a} : 0<\rho <L_a\},
\end{equation}
and, for each $\theta_*\in [0,\pi)$, the \emph{diameter}
\begin{equation}\label{eq:diam}
D(\theta_*):=\overline{\sigma(\theta_*)\cup\sigma(\theta_*+\pi)}.
\end{equation}
Note that any geodesic in $\Omega_a$ going through $O$ must be a diameter; by \cite[Thm.~3.6]{ellipsoids} these are the only geodesics to reach $\partial\Omega_a$. The reflection $\tau_{\theta_*}\colon\Omega_a\to\Omega_a$ about $D(\theta_*)$ is an isometry, and the preimage of $D(\theta_*)$ under the quotient map $\Ss^3_a\to\Omega_a$ is a totally geodesic (planar) $2$-sphere in $\Ss^3_a$. From \eqref{eq:conserved}, every nonradial geodesic crosses (transversely) each $\sigma(\theta_*)$, see Lemma~\ref{lemma:transv}. 

\subsubsection{Clifford geodesic}\label{subsec:cliffordgeod}
According to \eqref{eq:geod-eqn}, a curve $\gamma(t)=(\rho(t),\theta(t))$ such that $\rho(t)\equiv\rho_{a,0}$ is constant, i.e., a circle of latitude $\rho_{a,0}$, is a geodesic in $\Omega_a$ if and only if $\varphi'(\rho_{a,0})=0$. As a consequence of \eqref{eq:drho-varphi}, there is only one such $\rho_{a,0}\in (0,L_a)$, namely the unique solution to $\phi(\rho_{a,0})=\frac\pi4$, since $\phi(\rho)$ is monotonic. The corresponding simple closed geodesic $\gamma_{a,0}(t):=(\rho_{a,0},t)$ is the image of the Clifford torus $\Sigma_a\subset \Ss^3_a$ under the quotient map $\Ss^3_a\to\Omega_{a}$, so we call it the \emph{Clifford geodesic}. Note that $\gamma_{a,0}$ has length $|\Sigma_a|=2\pi^2a$ and, from \eqref{eq:drho-varphi},
\begin{equation}\label{eq:valuesvarphi}
    \varphi(\rho_{a,0})=\pi a, \quad \text{ and }\quad \varphi''(\rho_{a,0})=-\tfrac{4 a}{\pi (a^2+1)}.
\end{equation}

\subsubsection{\texorpdfstring{Geodesics $\gamma_{a,s}$}{Orthogonal geodesics}}
Fix a real analytic function $\beta_a\colon (-1,1)\to (0,L_a)$ such that $\beta_a(0)=\rho_{a,0}$ for all $a$ and $\beta_a'(s)>0$. 
In particular, $(-1,1)\ni s\mapsto (\beta_a(s),0)\in\Omega_a$ is a parametrization of $\sigma(0)$ starting at $O$ and ending at $\partial\Omega_a$.
Let
\begin{equation}\label{eq:gamma_as}
\gamma_{a,s}(t)=\big(\rho_{a,s}(t),\theta_{a,s}(t)\big)    
\end{equation}
be the (maximal) geodesic in $\Omega_{a}$ starting orthogonally to $\sigma(0)$ at $\gamma_{a,s}(0)=(\beta_a(s),0)$ with $\dot\gamma_{a,s}(0)=(0,1)$. Since the reflection $\tau_0$ about $D(0)$ maps $\dot\gamma_{a,s}(0)$ to $-\dot\gamma_{a,s}(0)$, it follows that $\tau_0(\gamma_{a,s}(t))=\gamma_{a,s}(-t)$ for all $t\in\R$.
Note that, for $s=0$, we have that $\rho_{a,0}(t)\equiv\rho_{a,0}$ is constant and $\theta_{a,0}(t)=t$, since $\gamma_{a,0}$ is the Clifford geodesic.
For an illustration, see Figure~\ref{fig:omegaa}.

\begin{figure}[!ht]
\begin{tikzpicture}[scale=2]
    \filldraw[fill=gray!20!white,line width=0.3mm,densely dashed] (0,0) circle (1cm);
    \draw[line width=0.3mm,color=red] (0,0) circle (0.5cm);
    \draw[-latex] (0,-1.2) -- (0,1.4);
    \draw[-latex] (-1.4,0) -- (1.4,0);
    \draw[blue, line width=0.3mm] (0,0) -- (1,0);
    \draw[black!40!green, line width=0.3mm] (0,0) -- (-1,0);
    \draw (0.75,-.15) node {${\color{blue}\sigma(0)}$};
    \draw (-0.7,-.15) node {${\color{black!40!green}\sigma(\pi)}$};
    \draw (-.65,.35) node {${\color{orange}\gamma_{a,s}}$};
    \draw (.6,.3) node {${\color{red}\gamma_{a,0}}$};
    \draw (1.3,0) node [below] {$x$};
    \draw (0,1.3) node [right] {$y$};
    \fill[color=black] (0,0) circle (0.03);
    \draw (-0.1,-0.1) node {${O}$};
    \draw[domain=0:7/9*pi,samples=500,color=orange,line width=0.3mm] plot ({deg(\x)}:{.55-0.15*cos(\x r)});
\end{tikzpicture}
\caption{Schematic depiction of $\Omega_a$ with Clifford geodesic $\gamma_{a,0}$ in red, and portion of a geodesic $\gamma_{a,s}$, $s<0$, in orange.}\label{fig:omegaa}
\end{figure}

\begin{remark}\label{rem:allinbranches}
Every closed geodesic $\gamma(t)=(\rho(t),\theta(t))$ in $\Omega_a$ intersects some radial geodesic $\sigma(\theta_*)$ orthogonally, e.g., at the point where $\rho(t)$ is maximal, and hence is congruent to $\gamma_{a,s}$ via clockwise $\S^1\times \{1\}$-rotation by $\theta_*$.
\end{remark}

Using the variation $s\mapsto \gamma_{a,s}$ of $\gamma_{a,0}$ by geodesics, we linearize \eqref{eq:geod-eqn} and \eqref{eq:conserved}, obtaining the Jacobi equation for $(R_a,T_a)=\frac{\dd}{\dd s}(\rho_{a,s},\theta_{a,s})|_{s=0}$, which, in view of \eqref{eq:valuesvarphi}, is the following system of ODEs with constant coefficients:
\begin{equation}\label{eq:jacobi-eqn-quotient}
    \ddot R_a+\tfrac{4a^2}{a^2+1}\,R_a=0  \quad \text{and}\quad  (\pi a)^2 \,\dot T_a=\tfrac{\dd}{\dd s}c(\gamma_{a,s})\big|_{s=0}.
\end{equation}

\section{Bifurcations of the Clifford geodesic}
\label{sec:bif}

In this section, we classify the closed geodesics in $\Omega_a$ that bifurcate from the Clifford geodesic $\gamma_{a,0}$ as the parameter $a$ varies. By Proposition~\ref{prop:reduction}, this corresponds to classifying the $\{1\}\times\S^1$-invariant minimal tori in $\Ss^3_a$ that bifurcate from $\Sigma_a$. 

\begin{lemma}\label{lemma:transv}
Each $\gamma_{a,s}$ intersects (transversely) all radial geodesics $\sigma(k\pi)$, $k\in\N$. 
\end{lemma}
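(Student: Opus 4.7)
The plan is to exploit the two first integrals of the geodesic system~\eqref{eq:geod-eqn}, namely the energy $E=\dot\rho^2+\varphi(\rho)^2\dot\theta^2$ and the Clairaut invariant $c(\gamma_{a,s})=\dot\theta\,\varphi(\rho)^2$ of~\eqref{eq:conserved}. From the initial conditions $\dot\gamma_{a,s}(0)=(0,1)$ at $\rho=\beta_a(s)\in(0,L_a)$, one reads off
\[
c(\gamma_{a,s})=\varphi\bigl(\beta_a(s)\bigr)^2>0, \qquad E=\varphi\bigl(\beta_a(s)\bigr)^2.
\]
Since $c(\gamma_{a,s})>0$, the geodesic is nonradial, so $\dot\theta_{a,s}$ never vanishes.

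Next, eliminating $\dot\theta$ from the energy identity yields
\[
\dot\rho_{a,s}(t)^2 = \varphi\bigl(\beta_a(s)\bigr)^2 - \frac{\varphi(\beta_a(s))^4}{\varphi(\rho_{a,s}(t))^2}.
\]
The right-hand side is nonnegative exactly when $\varphi(\rho_{a,s}(t))\geq\varphi(\beta_a(s))$. Because $\varphi$ is positive on $(0,L_a)$, vanishes at the endpoints, and (by \Cref{subsec:cliffordgeod}) attains its unique maximum $\pi a$ at the interior point $\rho_{a,0}$, the sublevel condition $\{\rho:\varphi(\rho)\ge\varphi(\beta_a(s))\}$ is a compact subinterval $[\rho_-,\rho_+]\subset(0,L_a)$ containing $\rho_{a,0}$. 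Hence $\rho_{a,s}(t)\in[\rho_-,\rho_+]$ for all $t$ in its maximal interval of existence, and in particular stays uniformly away from the singular locus $\{O\}\cup\partial\Omega_a$, so $\gamma_{a,s}$ is defined on all of $\R$.

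Finally, the bounds on $\rho_{a,s}(t)$ give
\[
\dot\theta_{a,s}(t)=\frac{c(\gamma_{a,s})}{\varphi(\rho_{a,s}(t))^2}\geq \frac{\varphi(\beta_a(s))^2}{(\pi a)^2}>0,
\]
so $\theta_{a,s}\colon\R\to\R$ is a strictly increasing diffeomorphism and in particular takes every value $k\pi$, $k\in\N$. At any such intersection time $t_k$, transversality with $\sigma(k\pi)$ is automatic: the radial geodesic $\sigma(k\pi)$ is tangent to $\partial/\partial\rho$, whereas $\dot\gamma_{a,s}(t_k)$ has the nonzero $\theta$-component $\dot\theta_{a,s}(t_k)>0$. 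The only real subtlety is guaranteeing that $\gamma_{a,s}$ does not run off to the singular boundary in finite time, and this is precisely what the Clairaut-plus-energy argument in the second paragraph rules out.
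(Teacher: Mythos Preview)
Your argument is correct. It differs from the paper's route in a useful way. The paper argues by contradiction using only the Clairaut integral~\eqref{eq:conserved}: if $\gamma_{a,s}$ failed to reach $\sigma(k\pi)$, then $\theta_{a,s}$ would be bounded while the curve escapes to $\partial\Omega_a$, forcing $\varphi(\rho_{a,s})\to 0$ and hence $\dot\theta_{a,s}\to\infty$, a contradiction. You instead combine the Clairaut integral with the energy integral to obtain, directly, the confinement $\rho_{a,s}(t)\in[\rho_-,\rho_+]\subset(0,L_a)$ and the uniform lower bound $\dot\theta_{a,s}\geq \varphi(\beta_a(s))^2/(\pi a)^2$. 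Your approach is a little longer but yields more: it establishes in one stroke that $\gamma_{a,s}$ is complete (defined on all of $\R$) and that its radial coordinate is trapped away from both $O$ and $\partial\Omega_a$, facts the paper uses implicitly later. The paper's version is terser but leaves the step ``$\rho_{a,s}(t)\to L_a$'' to the reader; your energy argument makes this step unnecessary.
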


\begin{proof}
If $\gamma_{a,s}$ did not intersect $\sigma(k\pi)$, then $\lim_{t\to +\infty} \theta_{a,s}(t)=\theta_{\mathrm{max}}\in (0,k\pi]$ and $\lim_{t\to +\infty } \rho_{a,s}(t)=L_a$, i.e., $\gamma_{a,s}(t)$ converges to a point in $\partial \Omega_{a}$ as $t\nearrow + \infty$, hence $\varphi(\rho_{a,s}(t))\searrow 0$. This implies $\dot\theta \nearrow +\infty$ by \eqref{eq:conserved}, contradicting $\theta_{\mathrm{max}}\in (0,k\pi]$.
\end{proof}

By the Lemma~\ref{lemma:transv} and the Implicit Function Theorem, we have:

\begin{proposition}
For all $k\in\N$, there exists a real analytic positive function $\ell_k(a,s)$ such that $\theta_{a,s}(\ell_k(a,s))=k\pi$. In particular, $\gamma_{a,s}(\ell_k(a,s))\in D(0)$. 
\end{proposition}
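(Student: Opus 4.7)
The plan is to apply the analytic Implicit Function Theorem to the function
\begin{equation*}
F(t,a,s) := \theta_{a,s}(t)-k\pi,
\end{equation*}
which is real analytic in $(t,a,s)$: the metric \eqref{eq:conf-metric-polarcoord} on $\Omega_a$ depends analytically on $a$, the initial data $(\beta_a(s),0)$ and $(0,1)$ depend analytically on $(a,s)$, and hence so does the geodesic flow governed by \eqref{eq:geod-eqn}.

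First, I would invoke \Cref{lemma:transv} to produce, for each $(a,s)$, a positive time $t_0=t_0(a,s)$ with $\theta_{a,s}(t_0)=k\pi$, so that $F(t_0,a,s)=0$. The nondegeneracy hypothesis $\partial_t F(t_0,a,s)\ne 0$ needed for the IFT reduces to $\dot\theta_{a,s}(t_0)\ne 0$, which I would extract from the conservation law \eqref{eq:conserved}. Evaluating at $t=0$ with $\rho_{a,s}(0)=\beta_a(s)$ and $\dot\theta_{a,s}(0)=1$ gives
\begin{equation*}
c(\gamma_{a,s}) = \varphi(\beta_a(s))^2 > 0,
\end{equation*}
since $\beta_a(s)\in(0,L_a)$ and $\varphi>0$ there. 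Because $\gamma_{a,s}$ is not a diameter, it stays in the interior of $\Omega_a$ (as only diameters reach $\partial\Omega_a$, per the remark following \eqref{eq:diam}), so $\varphi(\rho_{a,s}(t))>0$ on the maximal domain, whence
\begin{equation*}
\dot\theta_{a,s}(t) \;=\; \frac{c(\gamma_{a,s})}{\varphi(\rho_{a,s}(t))^2} \;>\; 0.
\end{equation*}

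Next, the analytic IFT yields a local real analytic solution $t=\ell_k(a,s)$ near each $(a,s)$. Strict monotonicity of $\theta_{a,s}$ in $t$ makes the root of $F(\cdot,a,s)$ at height $k\pi$ unique, so these local pieces patch, by connectedness of $(0,+\infty)\times(-1,1)$, into a single real analytic $\ell_k\colon (0,+\infty)\times(-1,1)\to(0,+\infty)$. The ``in particular'' is then immediate: the point $\gamma_{a,s}(\ell_k(a,s))=(\rho_{a,s}(\ell_k(a,s)),k\pi)$ has angular coordinate $k\pi\equiv 0$ or $\pi\pmod{2\pi}$, hence lies in $\sigma(0)\cup\sigma(\pi)\subset D(0)$ by \eqref{eq:diam}.

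The only step of real substance is the positivity of $\dot\theta_{a,s}$, which the conservation law reduces to a one-line check once it is known that $\gamma_{a,s}$ avoids $\partial\Omega_a$; the rest is routine IFT-plus-connectedness patching.
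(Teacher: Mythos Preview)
Your proof is correct and is exactly the fleshed-out version of the paper's own one-line argument (``By \Cref{lemma:transv} and the Implicit Function Theorem''): the transversality in \Cref{lemma:transv} is precisely your $\dot\theta_{a,s}>0$, and the strict monotonicity of $\theta_{a,s}$ gives the global uniqueness needed to patch the local IFT solutions. The only microscopic slip is that $\varphi$ also vanishes at the center $O$, so to conclude $\varphi(\rho_{a,s}(t))>0$ you should invoke the \emph{full} remark after \eqref{eq:diam} (non-diameters avoid both $\partial\Omega_a$ \emph{and} $O$), though this is in any case forced by $c(\gamma_{a,s})>0$.
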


\begin{remark}\label{rem:taua0}
We have that $\ell_k(a,0)=k\pi$ for all $a>0$, since $\theta_{a,0}(t)=t$. This also follows from
$\ell_k(a,0)=\frac{k|\Sigma_a|}{2\|\dot\gamma_{a,0}\|}$ as $|\Sigma_a|=2\pi^2a$ and $\|\dot\gamma_{a,0}\|=\pi a$ by \eqref{eq:valuesvarphi}.
\end{remark}

For each $k\in\N$, consider the real analytic function
\begin{equation}\label{eq:f}
    f_k\colon (0,+\infty) \times (-1,1) \longrightarrow \R, \quad f_k(a,s):= \dot\rho_{a,s}(\ell_k(a,s)),
\end{equation}
and note that $f_k(a,0)=0$ for all $a>0$, see Section~\ref{subsec:cliffordgeod}.

\begin{proposition}\label{prop:fk-zeros}
The following hold:
\begin{enumerate}[\rm (i)]
\item The geodesic $\gamma_{a,s}$ is closed if and only if $f_k(a,s)=0$ for some $k\in\N$;
\item The geodesic $\gamma_{a,s}$ is a simple closed geodesic if and only if $f_1(a,s)=0$;
\item The geodesic segment $\gamma_{a,s}([-\ell_k(a,s),\ell_k(a,s)])$ is a primitive closed geodesic (i.e., traces its image exactly once) with winding number $k$ around $O$ if and only if $f_k(a,s)=0$ and $f_{k'}(a,s)\neq 0$ for all $0<k'<k$.
\end{enumerate}
If $f_k(a,s)=0$, the preimage of $\gamma_{a,s}$ under the quotient map $\Ss^3_a\to\Omega_a$ is an $\{1\}\times\S^1$-invariant immersed minimal torus in $\Ss^3_a$, which is embedded if and only if $k=1$, and is congruent to (a covering of) the Clifford torus $\Sigma_a$ if and only if $s=0$.
\end{proposition}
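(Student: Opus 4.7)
The plan is to exploit the reflection symmetry $\tau_0(\gamma_{a,s}(t))=\gamma_{a,s}(-t)$ together with strict monotonicity of $\theta_{a,s}$, which follows from the conserved quantity \eqref{eq:conserved} being nonzero at $t=0$. The whole proposition reduces to understanding when $\gamma_{a,s}$ crosses the diameter $D(0)$ orthogonally, and this is precisely the condition $f_k(a,s)=0$.

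For (i), I would first prove that $\gamma_{a,s}$ is closed if and only if there exists $t>0$ with $\gamma_{a,s}(t)\in D(0)$ and $\dot\gamma_{a,s}(t)\perp D(0)$. The $(\Leftarrow)$ direction uses that $\tau_0$ fixes $\gamma_{a,s}(t)$ and reverses $\dot\gamma_{a,s}(t)$, so $\tau_0(\gamma_{a,s}(t+u))=\gamma_{a,s}(t-u)$; combined with $\tau_0(\gamma_{a,s}(u))=\gamma_{a,s}(-u)$, this yields $\gamma_{a,s}(2t+u)=\gamma_{a,s}(u)$, so $\gamma_{a,s}$ is $2t$-periodic. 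For $(\Rightarrow)$, any period $T$ produces $\tau_0(\gamma_{a,s}(T/2+u))=\gamma_{a,s}(T/2-u)$, which forces $\gamma_{a,s}(T/2)\in \mathrm{Fix}(\tau_0)=D(0)$ with orthogonal velocity. Strict monotonicity of $\theta_{a,s}$ then makes the positive times at which $\gamma_{a,s}\in D(0)$ exactly the $\ell_k(a,s)$, $k\in\N$, and since $\dot\theta_{a,s}(\ell_k)\neq 0$, orthogonality is equivalent to $f_k(a,s)=\dot\rho_{a,s}(\ell_k)=0$.

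For (iii), applying $\tau_0$ near $t=\ell_k$ and lifting the angle gives $\theta_{a,s}(\ell_k+u)+\theta_{a,s}(\ell_k-u)=2k\pi$, so $\theta_{a,s}(2\ell_k)=2k\pi$ and the winding number of $\gamma_{a,s}([-\ell_k,\ell_k])$ around $O$ is $k$. Letting $k_0:=\min\{k\in\N:f_k(a,s)=0\}$, the minimal period $T$ of $\gamma_{a,s}$ divides $2\ell_{k_0}$ and, by the argument in (i), $T/2=\ell_{k'}$ for some $k'$ with $f_{k'}(a,s)=0$, so $k'\geq k_0$ and $T=2\ell_{k_0}$; thus the segment on $[-\ell_k,\ell_k]$ is primitive exactly when $k=k_0$, which is the hypothesis of (iii). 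For (ii), monotonicity realizes $\gamma_{a,s}$ as a graph $\rho=\rho(\theta)$ over one period, which is simple iff the winding number equals $1$, iff $f_1(a,s)=0$.

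Finally, whenever $f_k(a,s)=0$, \Cref{prop:reduction} lifts $\gamma_{a,s}$ under the quotient map $\Ss^3_a\to\Omega_a$ to an $\{1\}\times\S^1$-invariant immersed minimal surface, which is topologically an $\S^1$-bundle over $\S^1$ and hence a torus; embeddedness coincides with simplicity of $\gamma_{a,s}$, hence with $k=1$ by (ii)--(iii); and the lift is (a covering of) $\Sigma_a$ iff the image of $\gamma_{a,s}$ is the Clifford geodesic $\gamma_{a,0}$, iff $\rho_{a,s}$ is constant, iff $s=0$, since ambient isometries of $\Ss^3_a$ descend to isometries of $\Omega_a$ that preserve the $\rho$-profile of a curve. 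The most delicate point is identifying the minimal period as $2\ell_{k_0}$: this requires combining the $\tau_0$-symmetry of closed geodesics, strict monotonicity of $\theta_{a,s}$, and monotonicity of $k\mapsto\ell_k(a,s)$ to rule out smaller periods.
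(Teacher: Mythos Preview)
Your proposal is correct and follows essentially the same approach as the paper: both arguments hinge on the reflection identity $\tau_0(\gamma_{a,s}(t))=\gamma_{a,s}(-t)$ together with strict monotonicity of $\theta_{a,s}$ from \eqref{eq:conserved}, reducing closedness to an orthogonal crossing of $D(0)$, and then identifying the primitive period via the smallest $k$ with $f_k(a,s)=0$. Your write-up is somewhat more explicit than the paper's (e.g., the derivation of $2\ell_k$-periodicity and the identification $T=2\ell_{k_0}$), but the underlying ideas and structure are the same.
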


\begin{proof}
Recall that $\tau_0(\gamma_{a,s}(t))=\gamma_{a,s}(-t)$ for all $t$, so $\gamma_{a,s}(\R)$ is invariant under $\tau_0$.
    If $f_k(a,s)=0$, then $\gamma_{a,s}([0,\ell_k(a,s)])$ is a geodesic segment that starts orthogonally from $\sigma(0)$ and ends orthogonally at $\sigma(k\pi)$, so $\gamma_{a,s}$ is a closed geodesic. 
    Conversely, if $\gamma_{a,s}$ is a closed geodesic, then any $\ell>0$ such that $\gamma_{a,s}(-\ell)=\gamma_{a,s}(\ell)$ and $\dot\gamma_{a,s}(-\ell)=\dot\gamma_{a,s}(\ell)$ must satisfy $\theta_{a,s}(\ell)=k\pi$ for some $k\in\N$, i.e., $\ell=\ell_k(a,s)$, because $\gamma_{a,s}(\ell)$ is a fixed point of $\tau_0$ hence lies on $D(0)$. Since $\gamma_{a,s}$ is smooth at that point, $\dot\gamma_{a,s}(\ell)$ must be orthogonal to $D(0)$, i.e., $f_k(a,s)=0$. This concludes the proof of (i), and shows that if $f_k(a,s)=0$ then $f_{mk}(a,s)=0$ for all $m\in\N$. By the same arguments, $\ell=\ell_k(a,s)$ is the \emph{smallest} $\ell>0$ as above if and only if $\gamma_{a,s}([-\ell,\ell])$ is a \emph{primitive} closed geodesic with winding number $k$ around $O$, which proves (ii) and (iii).

    The claims about the preimage of $\gamma_{a,s}$ in $\Ss^3_a$ follow from Proposition~\ref{prop:reduction}, and from the correspondence between congruence classes of $\{1\}\times\S^1$-invariant surfaces in $\Ss^3_a$ and congruence classes of their images in $\Omega_a$, i.e., orbits under $\mathsf S^1\times\{1\}$-rotations and reflections $\tau_{\theta_*}$, all of which map $\gamma_{a,s}$ to $\gamma_{a,0}$ if only if $s=0$.
\end{proof}

\begin{remark}
    If $f_1(a,s)=0$, then $\gamma_{a,s}$ is not only a \emph{simple} closed geodesic but also meets each radial geodesic $\sigma(\theta_*)$ exactly \emph{once}, i.e., bounds a star-shaped domain.
\end{remark}

\begin{remark}\label{rem:alex-imm}
    The minimal tori in Proposition~\ref{prop:fk-zeros} are \emph{Alexandrov immersed}, i.e., the immersion $\T^2\hookrightarrow \Ss^3_a$ is the restriction of an immersion $\S^1\times \overline{B^2}\hookrightarrow \Ss^3_a$ to $\partial (\S^1\times \overline{B^2})=\T^2$. Namely, $\T^2\hookrightarrow \Ss^3_a$ can be extended to an immersion $\S^1\times \overline{B^2}\hookrightarrow \Ss^3_a$ by filling in the corresponding $\{1\}\times\S^1$-orbits in $\Ss^3_a$ with $\{1\}\times\S^1$-invariant $2$-disks.
\end{remark}

In what follows, we study the zero sets $f_k^{-1}(0)\subset (0,+\infty) \times (-1,1)$, which contain the \emph{trivial branch} $\Btriv=\{(a,0) : a>0\}$ corresponding to the Clifford tori $\Sigma_a$. If $(a_*,0)$ is in the closure of $f_k^{-1}(0)\setminus\Btriv$, we say $a=a_*$ is a \emph{bifurcation instant} for $f_k$, and the connected component of the closure of $f_k^{-1}(0)\setminus\Btriv$ containing $(a_*,0)$ is called the \emph{bifurcation branch} for $f_k$ issuing from $(a_*,0)$. In particular, $\frac{\partial f_k}{\partial s}(a_*,0)=0$ by the Implicit Function Theorem, for otherwise points in $\Btriv$ would be locally unique as solutions to $f_k(a,s)=0$ near $a=a_*$. However, in general, this necessary condition is \emph{not} sufficient for bifurcation to occur at $a=a_*$.

\subsection{Local bifurcation}
A sufficient condition for $a=a_*$ to be a bifurcation instant is given by a classical theorem of Crandall and Rabinowitz~\cite{crandall-rabinowitz}, see \cite[Thm~2.2]{ellipsoids} for a formulation adapted to the present setup, which yields:

\begin{proposition}\label{prop:loc-bif}
    Let $k\in\N$ be fixed. For each integer $0<j<2k$, let 
\begin{equation}\label{eq:ajk}
    a^j_k:=\frac{(j/k)}{\sqrt{4-(j/k)^2}}.
\end{equation}
    \begin{enumerate}[\rm (i)]
     \item There exists an open neighborhood $U^j_k$ of $\big(a^j_k,0\big)$ in $(0,+\infty)\times (-1,1)$ and a real analytic curve $(-\varepsilon,\varepsilon)\ni t\mapsto \big(a^j_k(t),s^j_k(t)\big)\in U^j_k$ with $\big(a^j_k(0),s^j_k(0)\big)=\big(a^j_k,0\big)$ and $(s^j_k)'(0)>0$ such that
    \begin{equation}\label{eq:local-form-bifbranch}
        f_k^{-1}(0)\cap U^j_k= (\Btriv\cap U^j_k)\cup\left\{\big(a^j_k(t),s^j_k(t)\big) : t\in (-\varepsilon,\varepsilon) \right\}.
    \end{equation}
        \item For all $a\neq a^j_k$, the trivial solution $(a,0)$ is locally unique for $f_k$, i.e., 
        if $a\neq a^j_k$, there exists an open neighborhood $V_k$ of $(a,0)$ in $(0,+\infty)\times (-1,1)$ such that $f_k^{-1}(0)\cap V_k=  \Btriv\cap V_k$.
    \end{enumerate}    
\end{proposition}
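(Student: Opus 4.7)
The strategy is to reduce to the two-variable Implicit Function Theorem by factoring out the trivial branch. Since $f_k(a,0)\equiv 0$, there is a real analytic function $g_k\colon(0,+\infty)\times(-1,1)\to\R$ with $f_k(a,s)=s\,g_k(a,s)$ and $g_k(a,0)=\frac{\partial f_k}{\partial s}(a,0)$; nontrivial zeros of $f_k$ are precisely zeros of $g_k$. Part (ii) will then follow from the continuity of $g_k$ once we show $g_k(a,0)\neq 0$ for $a\neq a^j_k$, and Part (i) will follow from the IFT applied to $g_k$ at $(a^j_k,0)$ provided the transversality $\partial_a g_k(a^j_k,0)\neq 0$ holds.

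The key computation is that of $g_k(a,0)$ via Jacobi fields along the Clifford geodesic. Since $\rho_{a,0}$ is constant, $\theta_{a,0}(t)=t$, $\ell_k(a,0)=k\pi$, and $\dot\rho_{a,0}\equiv 0$, direct differentiation of $f_k(a,s)=\dot\rho_{a,s}(\ell_k(a,s))$ at $s=0$ yields $g_k(a,0)=\dot R_a(k\pi)$, where $R_a(t):=\partial_s\rho_{a,s}(t)|_{s=0}$. By the Jacobi equation \eqref{eq:jacobi-eqn-quotient}, $R_a$ satisfies $\ddot R_a+\omega(a)^2R_a=0$ with $\omega(a):=2a/\sqrt{a^2+1}$. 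The initial conditions read $R_a(0)=\beta_a'(0)$ and $\dot R_a(0)=0$, coming from $\rho_{a,s}(0)=\beta_a(s)$ and $\dot\rho_{a,s}(0)\equiv 0$ respectively, so
\begin{equation*}
R_a(t)=\beta_a'(0)\cos\big(\omega(a)\,t\big),\qquad g_k(a,0)=-\beta_a'(0)\,\omega(a)\,\sin\!\big(\omega(a)\,k\pi\big).
\end{equation*}

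As $a$ ranges over $(0,+\infty)$, $\omega(a)$ is strictly increasing with image $(0,2)$, so $g_k(a,0)=0$ iff $\omega(a)\,k=j$ for some $j\in\{1,\ldots,2k-1\}$; solving $2a/\sqrt{a^2+1}=j/k$ gives exactly $a=a^j_k$ as in \eqref{eq:ajk}. This establishes (ii). For (i), differentiating the explicit formula for $g_k(a,0)$ in $a$ at $a=a^j_k$ kills every term containing $\sin(\omega k\pi)=\sin(j\pi)$, leaving
\begin{equation*}
\frac{\partial g_k}{\partial a}(a^j_k,0)=-\beta_{a^j_k}'(0)\,\omega(a^j_k)\,\omega'(a^j_k)\,k\pi\cos(j\pi),
\end{equation*}
which is nonzero since $\cos(j\pi)=(-1)^j$, $\omega'(a)=2(a^2+1)^{-3/2}>0$, and $\beta_{a^j_k}'(0)>0$. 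The IFT then produces a real analytic function $A(s)$ with $A(0)=a^j_k$ whose graph coincides locally with $\{g_k=0\}$; parametrizing the bifurcation branch by $t\mapsto(a^j_k(t),s^j_k(t)):=(A(t),t)$ gives \eqref{eq:local-form-bifbranch} with $(s^j_k)'(0)=1>0$.

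The only nontrivial step is the Jacobi field computation, which becomes explicit precisely because \eqref{eq:jacobi-eqn-quotient} has constant coefficients along $\gamma_{a,0}$. Everything else reduces to the observation that for a scalar equation the Crandall--Rabinowitz bifurcation theorem collapses to elementary factoring plus the IFT, so no abstract functional analytic machinery is needed here.
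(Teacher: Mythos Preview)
Your proof is correct and follows essentially the same approach as the paper: the Jacobi computation yielding $R_a(t)=\beta_a'(0)\cos(\omega(a)t)$, the identification of the degeneracy instants $a^j_k$, and the transversality check $\partial_a\partial_s f_k(a^j_k,0)\neq 0$ are all identical. The only difference is cosmetic: where the paper invokes the Crandall--Rabinowitz theorem as a black box (via \cite[Thm~2.2]{ellipsoids}), you unpack it explicitly by factoring $f_k(a,s)=s\,g_k(a,s)$ and applying the Implicit Function Theorem to $g_k$---which, as you note, is exactly what Crandall--Rabinowitz reduces to in the scalar case.
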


\begin{proof}
Differentiating \eqref{eq:f} in $s$ at $(a,0)$, we have
\begin{equation*}
\tfrac{\partial f_k}{\partial s}(a,0) = \tfrac{\partial}{\partial s}\dot\rho_{a,s}\big|_{s=0}(\ell_k(a,0))+\dot\rho_{a,0}(\ell_k(a,0))\tfrac{\partial\ell_k}{\partial s}(a,0)=\dot R_a(k\pi),
\end{equation*}
because $\ell_k(a,0)=k\pi$ by Remark~\ref{rem:taua0}, and $\dot\rho_{a,0}(\ell_k(a,0))=f_k(a,0)=0$. Recall that, for all $a>0$, the variational field $(R_a,T_a)=\frac{\dd}{\dd s}(\rho_{a,s},\theta_{a,s})\big|_{s=0}$ solves the ODE system \eqref{eq:jacobi-eqn-quotient} with initial conditions 
\begin{align*}
(R_a(0),T_a(0))&=(\beta'_a(0),0), \\
(\dot R_a(0),\dot T_a(0))&=\big(0,(\pi a)^{-2} \, \tfrac{\dd}{\dd s}c(\gamma_{a,s})\big|_{s=0}\big).
\end{align*}
Thus, we have that $R_a(t)=\beta'_a(0)\cos\left(\frac{2a}{\sqrt{a^2+1}} \,t\right)$, and hence $\dot R_a(k\pi)=0$ if and only if $\frac{2ak}{\sqrt{a^2+1}}\in\Z$. Since $0 < \frac{a}{\sqrt{a^2+1}}<1$ for all $a>0$, the only possible integer values for $\frac{2ak}{\sqrt{a^2+1}}$ are $j=1,\dots, 2k-1$, which correspond to $a=a^j_k$.
Altogether, $\tfrac{\partial f_k}{\partial s}(a,0) =0$ if and only if $a=a^j_k$ for some integer $0<j<2k$. Thus, (ii) follows from the Implicit Function Theorem, while (i) follows from \cite[Thm~2.2]{ellipsoids} since
\begin{equation*}
\tfrac{\partial^2 f_k}{\partial a\partial s}\big(a^j_k,0\big) =\tfrac{\partial}{\partial a} \dot R_a(k \pi) \Big|_{a=a^j_k} =\beta_{a^j_k}'(0)  \frac{(-1)^{j+1}(4k^2-j^2)^{3/2}\pi j}{4k^3}\neq 0.\qedhere
\end{equation*}
\end{proof}

\begin{remark}
The induced metric on $\Sigma_a\subset\Ss^3_a$ is flat, and isometric to $\R^2/\Gamma$ where $\Gamma$ is the integer lattice generated by $\left({\sqrt2}\pi a,0\right)$ and $\left(0,{\sqrt2}\pi \right)$. The Laplace spectrum of the $(k,l)$-fold covering of such a $2$-torus consists of the eigenvalues $2\left(\frac{j^2}{k^2a^2}+\frac{m^2}{l^2}\right)$, where $j,m\in\Z$. Moreover, we have $\Ric(\vec n)+\|A\|^2=\frac{8}{a^2+1}$, so the eigenvalues of the Jacobi operator of the $(k,l)$-covering of $\Sigma_a\subset\Ss^3_a$ are:
\begin{equation}\label{eq:eigenv-jacobi}
\phantom{, \qquad j,m\in\Z.}
 \textstyle   \lambda^{k,l}(j,m)=2\left(\frac{j^2}{k^2a^2}+\frac{m^2}{l^2}\right)-\frac{8}{a^2+1}, \qquad j,m\in\Z.
\end{equation}
The eigenfunctions with eigenvalue $\lambda^{k,l}(j,m)$ are $\{1\}\times\S^1$-invariant if and only if $m=0$, from which one recovers the degeneracy instants $a^j_k$ as the values of $a>0$ such that $\lambda^{k,l}(j,0)=0$. Moreover, 
$\frac{\partial^2 f_k}{\partial a\partial s}\big(a^j_k,0\big)=(-1)^j\,C\,\frac{\partial}{\partial a} \lambda^{k,l}(j,0)\big|_{a=a^j_k}$, for some $C>0$.

Analyzing \eqref{eq:eigenv-jacobi}, we see that if $\Sigma_a\subset\Ss^3_a$ is degenerate as an embedded minimal surface, i.e., $\lambda^{1,1}(j,m)=0$, then either $a = 1$, or the corresponding Jacobi field is $\S^1\times \{1\}$-invariant ($j=0$, $m=\pm1$, $a=\sqrt3$) or $\{1\}\times\S^1$-invariant ($j=\pm1$, $m=0$, $a=\frac{1}{\sqrt3}$). Meanwhile, if $k\geq 2$ or $l\geq 2$, then $\lambda^{k,l}(j,m)=0$ has non-$\S^1$-invariant solutions, i.e., with both $m\neq 0$ and $j\neq 0$, besides those with $a=1$. Thus, our $\S^1$-invariant setup detects all possible bifurcations of $\Sigma_a\subset\Ss^3_a$ through \emph{embedded} minimal tori, but there may be bifurcations through immersed minimal tori (that fail to be embedded) besides the $\S^1$-invariant ones studied here. In fact, since there is a jump in the Morse index of $\Sigma_a$ at such degeneracy instants $a=a_*$, the abstract bifurcation criterion in \cite{g-bif} implies that there are sequences $a_n\to a_*$ and $\Sigma_{(n)}\hookrightarrow \Ss^3_{a_n}$ of immersed minimal tori not congruent to $\Sigma_{a_n}$ that accumulate on $\Sigma_{a_*}$. However, without the $\S^1$-invariant setup, it is unclear how to extract global (in $a>0$) consequences from these local ``cluster-point'' bifurcations of $\Sigma_a$.
\end{remark}

\begin{remark}\label{rem:round}    
    The value $a=1$ is not a bifurcation instant for any $f_k$, since, by \eqref{eq:ajk}, $a^j_k=1$ if and only if $\frac{j}{k}=\sqrt 2$. Thus, given $k_0\in\N$, it follows from Proposition~\ref{prop:loc-bif}~(ii) that $\gamma_{a,0}$ and its first $k_0$ iterates are locally unique as closed geodesics in $\Omega_a$ for $a$ near $1$, since $\bigcap_{k\leq k_0} V_k$ is open. However, note that $\bigcap_{k\in\N} V_k$ need not be open. 
\end{remark}

\subsection{Bifurcation branches}
It follows from \eqref{eq:ajk} that the collection 
\begin{equation}\label{eq:allbif}
\mathfrak b=\bigcup_{k\in \N} \{a^j_k : 0 < j < 2k \}    
\end{equation}
of bifurcation instants forms a countable dense subset of $(0,+\infty)$, since it is the image of $(0,2)\cap \mathds Q$ under the diffeomorphism $(0,2)\ni q\mapsto q/\sqrt{4-q^2}\in(0,+\infty)$, and $a^j_k = a^{j'}_{k'}$ if and only if $\frac{j}{k}=\frac{j'}{k'}$. In order to uniquely identify each $a_*\in \mathfrak b$ as $a_*=a^j_k$ for some $j$ and $k$, we shall henceforth assume that $\gcd(j,k)=1$. In other words, we shall only consider $(a_*,0)$ as a bifurcation point for $f_k$ with the smallest possible $k$. By Proposition~\ref{prop:fk-zeros}~(iii), this means considering only bifurcations of the closed geodesic $\gamma_{a,0}$ through primitive closed geodesics, i.e., disregarding iterates.

Let $\mathcal B_{(j,k)}$ be the bifurcation branch for $f_k$ issuing from $(a^j_k,0)$, where $0<j<2k$ and $\gcd(j,k)=1$ per the convention above. In order to distinguish branches, we now determine a discrete-valued invariant for $f_k(a,s)=0$ , see \cite[Sec.~2.2.3]{ellipsoids}.

\begin{proposition}\label{prop:disjoint}
If $(a,s)\in\mathcal B_{(j,k)}\setminus\Btriv$, then $\gamma_{a,s}([-\ell_k(a,s),\ell_k(a,s)])$ is a closed geodesic in $\Omega_a$ that intersects the Clifford geodesic $\gamma_{a,0}$ at $2j$ instants, has winding number $k$ around~$O$, and self-intersects along $D(0)$ at $k-1$ instants. In particular, bifurcation branches are pairwise disjoint: $\mathcal B_{(j,k)}\cap \mathcal B_{(j',k')}=\emptyset$ if $(j,k)\neq(j',k')$.
\end{proposition}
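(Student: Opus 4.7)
My plan is to extract two integer invariants of $\gamma_{a,s}$ for $(a,s)\in\mathcal B_{(j,k)}\setminus\Btriv$ — the number of intersections with $\gamma_{a,0}$ and the number of self-intersections on $D(0)$ — compute them at the bifurcation instant via the Jacobi field linearization \eqref{eq:jacobi-eqn-quotient}, and propagate them along the whole branch by transversality and continuity. The winding number $k$ comes for free: by construction $\theta_{a,s}(\pm\ell_k(a,s))=\pm k\pi$, and $\theta$ is strictly monotonic by \eqref{eq:conserved}. Once these invariants are pinned down, pairwise disjointness will follow because $(j,k)$ can be recovered from any nontrivial point on the branch, and by \Cref{prop:loc-bif}(ii) each $\mathcal B_{(j,k)}$ meets $\Btriv$ only at its own bifurcation instant $(a^j_k,0)$, with distinct values $a^j_k$ on $\{\gcd(j,k)=1\}$ by \eqref{eq:ajk}.

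By \Cref{prop:loc-bif}(i), near $(a^j_k,0)$ the branch is an analytic curve, along which $\gamma_{a,s}(t)-\gamma_{a,0}(t)=s\bigl(R_a(t),T_a(t)\bigr)+O(s^2)$ with $R_a(t)=\beta'_a(0)\cos\bigl(\tfrac{2a}{\sqrt{a^2+1}}\,t\bigr)$ from \eqref{eq:jacobi-eqn-quotient}. Since $\tfrac{2a^j_k}{\sqrt{(a^j_k)^2+1}}=j/k$, the function $R_{a^j_k}(t)=\beta'_{a^j_k}(0)\cos(jt/k)$ has exactly $2j$ simple zeros on $[-k\pi,k\pi)$, yielding $2j$ transverse intersections of $\gamma_{a,s}$ with $\gamma_{a,0}$ for small $s\ne0$. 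For self-intersections on $D(0)$, the symmetry $\tau_0(\gamma_{a,s}(t))=\gamma_{a,s}(-t)$ pairs each $t_0\in(0,\ell_k)$ with $\gamma_{a,s}(t_0)\in D(0)$ into a self-intersection $(t_0,-t_0)$, and monotonicity of $\theta$ on $(0,\ell_k)$ from $0$ to $k\pi$ yields exactly $k-1$ such times, one per value $\theta(t_0)=m\pi$, $m=1,\dots,k-1$. A first-order calculation gives $\rho_{a,s}(t_m)\approx\rho_{a,0}+s\beta'_a(0)\cos(jm\pi/k)$, and then a short case analysis using $\gcd(j,k)=1$ (which prevents $j$ and $k$ from being both even) confirms that the resulting $k-1$ points of $D(0)$ are distinct for $s\ne0$ small.

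To propagate these counts to all of $\mathcal B_{(j,k)}\setminus\Btriv$, I would invoke local constancy via transversality. A tangential intersection of $\gamma_{a,s}$ with $\gamma_{a,0}$ would force a turning point of $\rho_{a,s}$ to lie at the level $\rho=\rho_{a,0}$; but $\rho=\rho_{a,0}$ is the unique critical level of $\varphi$ (see \Cref{subsec:cliffordgeod}), so the geodesic energy and angular conservation together would then yield $\dot\rho_{a,s}\equiv0$ and hence $\gamma_{a,s}=\gamma_{a,0}$, contradicting $s\ne0$. Thus intersections with $\gamma_{a,0}$ are transverse on $f_k^{-1}(0)\setminus\Btriv$, and their count is locally constant. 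An analogous transversality argument handles self-intersections on $D(0)$. Connectedness of $\mathcal B_{(j,k)}$, together with the local analytic structure at $(a^j_k,0)$ from \Cref{prop:loc-bif}(i), then propagates the values $2j$ and $k-1$ throughout the branch. Pairwise disjointness now follows: a hypothetical common point in $\mathcal B_{(j,k)}\cap\mathcal B_{(j',k')}$ off $\Btriv$ would carry two incompatible sets of invariants, while on $\Btriv$ the only possibility $(a^j_k,0)=(a^{j'}_{k'},0)$ forces $(j,k)=(j',k')$ by the injectivity noted above.

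The main obstacle is ruling out a loss of primitivity along $\mathcal B_{(j,k)}$: if $\gamma_{a,s}$ ever reduced to an $n$-fold cover of a primitive closed geodesic with winding number $k/n<k$, then by \Cref{prop:fk-zeros}(iii) we would have $f_{k/n}(a,s)=0$, the self-intersection count would no longer equal $k-1$, and $(a,s)$ would lie on another branch $\mathcal B_{(j',k/n)}$ — which is precisely the disjointness being proven. I would unwind this apparent circularity by taking the self-intersection count itself as the gate: since it equals $k-1$ near the bifurcation and is locally constant on $f_k^{-1}(0)\setminus\Btriv$ by the transversality argument above, any putative primitivity loss would force a discontinuous jump of this integer invariant, which is incompatible with the continuous dependence of $\gamma_{a,s}$ on $(a,s)$.
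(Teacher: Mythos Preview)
Your approach is essentially the paper's: compute the intersection count with the Clifford geodesic at the bifurcation instant via the Jacobi field $R_{a^j_k}(t)=\beta'_{a^j_k}(0)\cos(jt/k)$, propagate it along the branch using that distinct geodesics intersect transversely, and read off $(j,k)$ from any nontrivial point. The paper's proof is terser on two points. First, for disjointness on $\Btriv$ it invokes \eqref{eq:local-form-bifbranch} directly: near any $(a_*,0)$ the zero set $f_{\max\{k,k'\}}^{-1}(0)$ consists of $\Btriv$ plus a single analytic curve, so two distinct branches cannot both accumulate there. Your route via \Cref{prop:loc-bif}(ii) only pins down $a_*\in\{a^{j''}_k\}$, and you still need the intersection invariant to force $j''=j$; this works but is slightly less direct.

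Second, your final paragraph on primitivity is an unnecessary detour, and the proposed fix is circular: transversality of \emph{self}-intersections of a geodesic with itself is precisely primitivity, so you cannot use local constancy of the self-intersection count to rule out multiple covering. The good news is that you do not need it. The invariants you have already secured---the intersection count $2j$ with $\gamma_{a,0}$ (whose constancy uses only $s\neq0$, not primitivity) and the winding $k$ (automatic from $\theta_{a,s}(\pm\ell_k)=\pm k\pi$)---already determine the reduced fraction $j/k$, since any $m$-fold covering would force $m\mid k$ and $m\mid 2j$; together with $\gcd(j,k)=1$ this pins down the branch. The paper likewise does not argue primitivity separately; it simply exhibits the $k-1$ forced self-intersections $\gamma_{a,s}(\pm\ell_{k'})$, $0<k'<k$, and relies on the pair $(2j,k)$ for disjointness.
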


\begin{proof}
If $(a,s)\in\mathcal B_{(j,k)}$, $s\neq 0$, is close to $(a^j_k,0)$, then $\rho_{a,s}\colon [0,\ell_k(a,s)]\to (0,L_a)$ is well-approximated to first order by $R_{a^j_k}\colon [0,k\pi]\to(0,L_a)$, $R_{a^j_k}(t)=\beta'_{a^j_k}(0)\cos\big(\frac{j}{k}t\big)$. In particular, $\rho_{a,s}(t)$ has exactly $j$ simple zeros on $[0,\ell_k(a,s)]$, i.e., $\gamma_{a,s}([0,\ell_k(a,s)])$ intersects $\gamma_{a,0}$ exactly $j$ times, so $\gamma_{a,s}([-\ell_k(a,s),\ell_k(a,s)])$ intersects it $2j$ times. Since intersections between distinct geodesics must be transverse, this remains true for all $(a,s)\in \mathcal B_{(j,k)}\setminus\Btriv$, proving the first assertion. The second assertion is a consequence of $\gamma_{a,s}\colon [-\ell_k(a,s),\ell_k(a,s)]\to\Omega_a\setminus\{O\}$ being regularly homotopic to $\gamma_{a,0}\colon [-k\pi,k\pi]\to\Omega_a\setminus\{O\}$, whose winding number around $O$ is equal to $k$, cf.~Proposition~\ref{prop:fk-zeros} (iii), and $\gamma_{a,s}(-\ell_{k'}(a,s))=\gamma_{a,s}(\ell_{k'}(a,s))$, $0<k'<k$, give rise to the $k-1$ self-intersections along $D(0)$. 
If $(j,k)\neq(j',k')$, then $\mathcal B_{(j,k)}$ and $\mathcal B_{(j',k')}$ cannot intersect at $(a,s)$ with $s\neq 0$ by the above, while intersection at $(a,0)$ would violate \eqref{eq:local-form-bifbranch} since $f^{-1}_{\max\{k,k'\}}(0)\cap U^j_k\cap U^{j'}_{k'}$ consists of a single curve besides $\Btriv$.
\end{proof}

The subsets $\mathcal B_{(j,k)}^\pm := \{(a,s)\in \mathcal B_{(j,k)} : \pm s>0\}$ are connected, by \eqref{eq:local-form-bifbranch} and $\mathcal B_{(j,k)} \cap\Btriv=\big\{\big(a^j_k,0\big)\big\}$.
For each $k\in\N$, there is an involutive homeomorphism
\begin{equation*}
\iota_k\colon f_k^{-1}(0)\to f_k^{-1}(0)
\end{equation*}
such that, for all $t\in [0,\ell_k(a,s)]$ and $(a,s)\in f_k^{-1}(0)$, 
\begin{equation*}
\gamma_{\iota_k(a,s)}(t)=\big(\rho_{a,s}(\ell_k(a,s)-t),k\pi-\theta_{a,s}(\ell_k(a,s)-t)\big)    
\end{equation*}
Clearly, $\gamma_{a,s}\big([0,\ell_k(a,s)]\big)$ and $\gamma_{\iota_k(a,s)}\big([0,\ell_k(a,s)]\big)$ are congruent geodesic segments, since they are mapped to one another by the reflection $\tau_{\frac\pi2}$. Note that $\ell_k\big(\iota_k(a,s)\big)=\ell_k(a,s)$ for all $(a,s)\in f_k^{-1}(0)$, and $\Pi(\iota_k(a,s))=a$ where $\Pi(a,s)=a$ is the projection onto the first factor. Moreover, $\iota_k$ fixes $\Btriv$ pointwise, i.e., $\iota_k(a,0)=(a,0)$ for all $a>0$. As $\mathcal B_{(j,k)}$ are connected and $\mathcal B_{(j,k)}\cap\Btriv=\big\{\big(a^j_k,0\big)\big\}$ by Proposition~\ref{prop:disjoint}, we have that $\iota_k$ maps $\mathcal B_{(j,k)}$ to itself. So, either $\iota_k(\mathcal B^\pm_{(j,k)})=\mathcal B^\mp_{(j,k)}$ or $\iota_k(\mathcal B^\pm_{(j,k)})=\mathcal B^\pm_{(j,k)}$.

\begin{proposition}\label{prop:pm}
    Let $k\in\N$ and $0<j<2k$ with $\gcd(j,k)=1$. If $j$ is even, then $\iota_k(\mathcal B^\pm_{(j,k)})=\mathcal B^\pm_{(j,k)}$; if $j$ is odd, then $\iota_k(\mathcal B^\pm_{(j,k)})=\mathcal B^\mp_{(j,k)}$.
\end{proposition}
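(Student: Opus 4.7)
My plan is to pin down the action of $\iota_k$ on the branch by a first-order computation at $(a^j_k,0)$ using the explicit Jacobi field from the proof of \Cref{prop:loc-bif}, and then propagate this sign determination to all of $\mathcal B_{(j,k)}$ by connectedness. The parity $(-1)^j$ will emerge directly from the value of that Jacobi field at $t=k\pi$.

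First, I would rewrite $\iota_k$ in coordinates. Since $\iota_k(a,s)=(a,\tilde s)$ for some $\tilde s$ (because $\Pi\circ\iota_k=\Pi$), and since the starting point of $\gamma_{\iota_k(a,s)}$ is $(\rho_{a,s}(\ell_k(a,s)),0)\in\sigma(0)$, I can set
\[
\tilde s(a,s):=\beta_a^{-1}\!\big(\rho_{a,s}(\ell_k(a,s))\big),
\]
defined for all $(a,s)$ in the domain. Because $\tilde s(a,0)\equiv 0$, we have $\frac{\partial \tilde s}{\partial a}(a,0)\equiv 0$, so only $\frac{\partial \tilde s}{\partial s}(a,0)$ contributes at first order in $t$ on the branch. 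Using $\dot\rho_{a,0}\equiv 0$, $\ell_k(a,0)=k\pi$ from \Cref{rem:taua0}, and the explicit Jacobi field $R_a(t)=\beta_a'(0)\cos\big(\tfrac{2a}{\sqrt{a^2+1}}\,t\big)$ recorded in the proof of \Cref{prop:loc-bif}, a routine chain-rule computation yields
\[
\frac{\partial \tilde s}{\partial s}(a,0)=\frac{R_a(k\pi)}{\beta_a'(0)}=\cos\!\left(\frac{2a k\pi}{\sqrt{a^2+1}}\right).
\]
At $a=a^j_k$ this evaluates to $\cos(j\pi)=(-1)^j$, which is the key observation driving the result.

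Next, I would parametrize $\mathcal B_{(j,k)}$ near $(a^j_k,0)$ by $t\mapsto (a^j_k(t),s^j_k(t))$ as in \Cref{prop:loc-bif}, with $(s^j_k)'(0)>0$. Since $\iota_k$ preserves $a$ and sends $\mathcal B_{(j,k)}$ into itself (with $(a^j_k,0)$ as its only fixed point on the branch), there is a real analytic involution $\tau$ near $0$ with $\tau(0)=0$ such that $\iota_k(a^j_k(t),s^j_k(t))=(a^j_k(\tau(t)),s^j_k(\tau(t)))$ for $t$ near $0$. Expanding both sides of $s^j_k(\tau(t))=\tilde s(a^j_k(t),s^j_k(t))$ to first order in $t$, and noting that the $a$-dependence of $\tilde s$ contributes only at order $t^2$ since $\frac{\partial \tilde s}{\partial a}(a,0)=0$, I obtain $(s^j_k)'(0)\,\tau'(0)=(-1)^j(s^j_k)'(0)$, hence $\tau'(0)=(-1)^j$.

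Finally, to reach the full branch I would invoke connectedness. Since $\mathcal B^\pm_{(j,k)}$ are connected, $\iota_k$ is continuous, and $\mathcal B_{(j,k)}\cap\Btriv=\{(a^j_k,0)\}$, the image $\iota_k(\mathcal B^+_{(j,k)})$ must coincide with exactly one of $\mathcal B^+_{(j,k)}$ or $\mathcal B^-_{(j,k)}$; the choice is decided by the sign of $s^j_k(\tau(t))$ for small $t>0$, which equals the sign of $\tau'(0)=(-1)^j$. Hence $\iota_k(\mathcal B^\pm_{(j,k)})=\mathcal B^\pm_{(j,k)}$ if $j$ is even and $\iota_k(\mathcal B^\pm_{(j,k)})=\mathcal B^\mp_{(j,k)}$ if $j$ is odd. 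I do not foresee a serious obstacle: the real content of the proof is the identity $\frac{\partial \tilde s}{\partial s}(a^j_k,0)=(-1)^j$, which drops out of the Jacobi-field computation already performed in \Cref{prop:loc-bif}, and the rest is a straightforward transport of this first-order information along the bifurcation branch.
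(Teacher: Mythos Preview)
Your argument is correct, but it takes a somewhat different route from the paper's. You linearize $\iota_k$ at the bifurcation point by extending its second coordinate to the smooth function $\tilde s(a,s)=\beta_a^{-1}(\rho_{a,s}(\ell_k(a,s)))$, compute $\partial_s\tilde s(a^j_k,0)=R_{a^j_k}(k\pi)/\beta'_{a^j_k}(0)=\cos(j\pi)=(-1)^j$, and then push this local sign to the whole branch by connectedness. The paper instead argues pointwise and geometrically: by \Cref{prop:disjoint}, for \emph{every} $(a,s)\in\mathcal B_{(j,k)}\setminus\Btriv$ the arc $\gamma_{a,s}([0,\ell_k(a,s)])$ crosses $\gamma_{a,0}$ exactly $j$ times, so its two endpoints lie on the same side of $\rho_0$ iff $j$ is even; since $\beta_a$ is increasing, this says directly that the sign of the $s$-coordinate of $\iota_k(a,s)$ is $(-1)^j$ times that of $s$. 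Thus the paper needs no first-order expansion and no connectedness step, while your approach trades that geometric invariant for a clean Jacobi-field computation that you had essentially already carried out in \Cref{prop:loc-bif}. Two small remarks: calling $\tau$ an involution is true (since $\iota_k$ is) but not used; and your argument really only needs the sign of $\tilde s$ along the branch, so the detour through $\tau$ could be skipped by expanding $\tilde s(a^j_k(t),s^j_k(t))$ directly.
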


\begin{proof}
By the proof of Proposition~\ref{prop:disjoint}, if $(a,s)\in \mathcal B_{(j,k)}$, then $\gamma_{a,s}\big([0,\ell_k(a,s)]\big)$ intersects $\gamma_{a,0}$ exactly $j$ times, so the endpoints of $\gamma_{a,s}([0,\ell_k(a,s)])$ lie in the same component of $\Omega_a\setminus\gamma_{a,0}$ if and only if $j$ is even. In other words, if $(a,s)\in \mathcal B_{(j,k)}$, then $\rho_{a,s}(0)-\rho_0>0$ if and only if $(-1)^j(\rho_{a,s}(\ell_k(a,s))-\rho_0)>0$ hence if and only if $(-1)^j(\rho_{\iota_k(a,s)}(0)-\rho_0)>0$. On the other hand, if $(a,s)\in \mathcal B^\pm_{(j,k)}$ is close to $\big(a^j_k,0\big)$, then $\pm (\rho_{a,s}(0)-\rho_0)=\pm (\beta_a(s)-\beta_a(0))>0$, because $\beta'_a(0)>0$. 
\end{proof}

\subsection{Global behavior}
The only points $(a,s)\in \mathcal B_{(j,k)}$ near the boundary of the strip $(0,+\infty)\times(-1,1)$ must have $a\approx 0$ or $a\approx+\infty$ as a consequence of the next:

\begin{proposition}\label{prop:no-sol-near-bdy}
If a convergent sequence $(a_n,s_n)\in\mathcal B_{(j,k)}$ has $|s_n|\nearrow1$, then either $a_n\searrow0$ or $a_n\nearrow+\infty$. In particular, the restriction to $\mathcal B_{(j,k)}$ of the projection $\Pi\colon (0,+\infty)\times (-1,1)\to(0,+\infty)$, $\Pi(a,s)=a$, is a proper map.
\end{proposition}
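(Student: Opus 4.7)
The plan is to argue by contradiction through a ``rotation number'' invariant. I define $\Delta\Theta(a,s)$ as the $\theta$-advance of the geodesic $\gamma_{a,s}$ over one full $\rho$-oscillation, and will show (i) $\Delta\Theta\equiv 2k\pi/j$ on $\mathcal B_{(j,k)}\setminus\Btriv$, forced by the closure condition together with a topological counting invariant fixed by the linearization at the bifurcation point; and (ii) for any sequence $(a_n,s_n)\in\mathcal B_{(j,k)}$ with $a_n\to a_*\in (0,+\infty)$ and $|s_n|\to 1$, the asymptotic $\Delta\Theta(a_n,s_n)\to\pi$. Since $j<2k$, the equality $2k\pi/j=\pi$ fails, yielding a contradiction.

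For step (i), the initial condition $\dot\gamma_{a,s}(0)=\partial_\theta$ combined with \eqref{eq:conserved} gives $c(\gamma_{a,s})=\varphi(\beta_a(s))^2$, and constancy of the speed $\|\dot\gamma_{a,s}\|=\varphi(\beta_a(s))$ yields
\[
\dot\rho_{a,s}^2=\tfrac{\varphi(\beta_a(s))^2}{\varphi(\rho_{a,s})^2}\bigl(\varphi(\rho_{a,s})^2-\varphi(\beta_a(s))^2\bigr).
\]
Hence $\rho_{a,s}$ oscillates periodically between $\beta_a(s)$ and the unique $\rho^*(a,s)$ on the opposite side of $\rho_{a,0}$ with $\varphi(\rho^*)=\varphi(\beta_a(s))$. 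Let $P(a,s)$ denote the period. The conditions $f_k(a,s)=0$ and $\dot\rho_{a,s}(\ell_k)=0$ force $2\ell_k(a,s)=N'(a,s)\cdot P(a,s)$ for some $N'\in\N$, and $\theta_{a,s}(2\ell_k)=2k\pi$ then gives $N'\cdot\Delta\Theta=2k\pi$ on $f_k^{-1}(0)\setminus\Btriv$. Since $N'$ is a continuous integer, it is locally constant; the linearization from the proof of \Cref{prop:loc-bif} shows $P\to 2\pi k/j$ and $\ell_k\to k\pi$ as one approaches $(a^j_k,0)$ along $\mathcal B_{(j,k)}$, so $N'=j$ near the bifurcation point and, by connectedness of $\mathcal B_{(j,k)}^\pm$, throughout $\mathcal B_{(j,k)}\setminus\Btriv$. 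Thus $\Delta\Theta\equiv 2k\pi/j$ on the branch.

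For step (ii), I use the asymptotics $\varphi(\rho)\sim\rho$ as $\rho\to 0^+$ (from $\varphi(0)=0$, $\varphi'(0)=1$) and $\varphi(\rho)\sim C_a\sqrt{L_a-\rho}$ as $\rho\to L_a^-$ for some $C_a>0$ depending continuously on $a$, both deducible from \eqref{eq:drho-varphi}. Separation of variables gives
\[
\Delta\Theta(a,s)=2\!\!\int_{\min(\beta,\rho^*)}^{\max(\beta,\rho^*)}\!\!\frac{\varphi(\beta_a(s))\,d\rho}{\varphi(\rho)\sqrt{\varphi(\rho)^2-\varphi(\beta_a(s))^2}}.
\]
In the case $s_n\to -1$, where $\beta_n:=\beta_{a_n}(s_n)\to 0$ and $\rho^*_n\to L_{a_*}$, I split the integral into three pieces. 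Near $\beta_n$, the substitution $\rho=\beta_n u$ reduces the integrand to $du/(u\sqrt{u^2-1})$, which integrates to $\pi/2$ in the limit. Near $\rho^*_n$, the substitution $L_{a_n}-\rho=(\varphi(\beta_n)/C_{a_n})^2 v$ produces a contribution of order $\varphi(\beta_n)\log(1/\varphi(\beta_n))\to 0$. On a fixed compact interior piece, the integrand is bounded by a constant times $\varphi(\beta_n)\to 0$. Summing yields $\Delta\Theta(a_n,s_n)\to \pi$. The case $s_n\to +1$ is symmetric with $\beta$ and $\rho^*$ swapping roles. The resulting equality $\pi=2k\pi/j$ contradicts $0<j<2k$. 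The properness conclusion then follows: for each compact $K\subset (0,+\infty)$, the above argument forces $\Pi^{-1}(K)\cap\mathcal B_{(j,k)}\subset K\times[-1+\varepsilon,1-\varepsilon]$ for some $\varepsilon>0$, and this set is also closed in the strip, hence compact.

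The main obstacle will be the asymmetric behavior of $\varphi$ at the two ends of $[0,L_a]$---linear at the smooth central point $O=\{\rho=0\}$, square-root at the conical singularity $\partial\Omega_a=\{\rho=L_a\}$---which might naively suggest different limit values of $\Delta\Theta$ in the two cases $s\to\pm 1$. What rescues the argument is that the key $\pi/2$ contribution always originates at the turning point approaching $O$ (whichever of $\beta,\rho^*$ that is), while the singular factor $1/\sqrt{\varphi(\rho)^2-\varphi(\beta)^2}$ at a turning point near $L_a$ is tamed by the vanishing factor $\varphi(\beta)$ in the numerator.
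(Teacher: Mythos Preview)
Your argument is correct and takes a genuinely different route from the paper. The paper's proof is brief and geometric: it argues that if $a_n\to a_\infty\in(0,+\infty)$ and $|s_n|\to 1$, then the closed geodesics $\gamma_{a_n,s_n}([-\ell_k,\ell_k])$ converge graphically to a diameter $D(0)$ or $D(\tfrac\pi2)$ with multiplicity $2k$, forcing $4k$ transverse intersections with the Clifford geodesic $\gamma_{a_n,0}$ for large $n$; this contradicts the intersection count $2j<4k$ established in \Cref{prop:disjoint}. Your approach instead quantifies the same obstruction through the rotation number $\Delta\Theta$: you show it is locked at $2k\pi/j$ along the branch (via the integer $N'=2\ell_k/P$ and the linearization at $a^j_k$), and then compute directly from the Clairaut integral that $\Delta\Theta\to\pi$ whenever a turning point collapses to either end of $[0,L_a]$, yielding the same contradiction $j<2k$. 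The paper's version is shorter and leverages \Cref{prop:disjoint} as a black box, but leaves the ``graphical convergence with multiplicity $2k$'' step somewhat implicit; your version is more computational but fully self-contained, and the splitting of the $\Delta\Theta$-integral makes transparent \emph{why} the asymmetric endpoint behavior of $\varphi$ (linear at $O$, square-root at $\partial\Omega_a$) nonetheless produces the same limit in both cases $s\to\pm1$. The two invariants are of course equivalent: $j$ is the number of $\rho$-oscillations per closed orbit, which is both your $N'$ and half the paper's intersection count with $\gamma_{a,0}$.
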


\begin{proof}
The only geodesics in $\Omega_a$ that converge to the boundary $\partial\Omega_a$ or that pass through the central point $O$ are the diameters \eqref{eq:diam}. Thus, if a sequence of points $(a_n,s_n)\in\mathcal B_{(j,k)}$ has $a_n\to a_\infty\in (0,+\infty)$ and $s_n\nearrow 1$, respectively $s_n\searrow-1$, then the corresponding closed geodesics $\gamma_{a_n,s_n}([-\ell_k(a_n,s_n),\ell_k(a_n,s_n)])$ converge graphically to the diameter $D(0)$, respectively $D(\frac\pi2)$, with multiplicity $2k$ in $\Omega_{a_\infty}$. So, for large $n$, the Clifford geodesic $\gamma_{a_n,0}$ intersects these closed geodesics $4k$ times, but this contradicts Proposition~\ref{prop:disjoint} because $2j<4k$. 
\end{proof}

\begin{remark}
    The case $j=k=1$ of Proposition~\ref{prop:no-sol-near-bdy} can be alternatively proved using the compactness theorem of Choi--Schoen~\cite{choi-schoen}. 
    Indeed, a convergent sequence $(a_n,s_n)\in\mathcal B_{(1,1)}$ with $a_n\to a_\infty\in (0,+\infty)$ and $|s_n|\nearrow1$ would correspond to a sequence of embedded minimal tori in $\Ss^3_{a_n}$ that converges to a planar minimal sphere in $\Ss^3_{a_\infty}$ with multiplicity $2$, which is impossible by~\cite{choi-schoen}.
\end{remark}

We are now ready to prove the Theorem and Corollary from the Introduction.

\begin{proof}[Proof of Theorem]
By Proposition~\ref{prop:loc-bif}~(i) and \eqref{eq:allbif}, the collection $\mathfrak b$ of bifurcation instants for $\Sigma_a$ forms a dense subset of $(0,+\infty)$. We then apply the Rabinowitz global bifurcation theorem~\cite{rabinowitz}, see \cite[Thm~2.5]{ellipsoids} for a formulation adapted to the present setup, to each bifurcation instant $a_*=a^j_k$. Hypotheses (1) and (2) of \cite[Thm~2.5]{ellipsoids} are satisfied as a consequence of Proposition~\ref{prop:no-sol-near-bdy}, so the curve $(-\varepsilon,\varepsilon)\ni t\mapsto \big(a^j_k(t),s^j_k(t)\big)\in (0,+\infty)\times (-1,1)$ in \eqref{eq:local-form-bifbranch} can be extended to a piecewise real analytic curve $\R\ni t\mapsto \big(a^j_k(t),s^j_k(t)\big)\in (0,+\infty)\times (-1,1)$ with $s^j_k(t)=0$ if and only if $t=0$, whose restriction to $(0,+\infty)$ takes values in $\mathcal B^+_{(j,k)}$. Moreover, \cite[Thm~2.5]{ellipsoids} yields a dichotomy for the global behavior of branches. Alternative (I) is reattachment to the trivial branch; by Proposition~\ref{prop:loc-bif}, it would have to occur at some other bifurcation instant $a_{**}=a^{j'}_{k'}$, with $(j,k)\neq (j',k')$, in contradiction to Proposition~\ref{prop:disjoint}. Thus, alternative (II) holds, i.e., $\mathcal B_{(j,k)}$ are noncompact. In view of Proposition~\ref{prop:no-sol-near-bdy}, this implies either $\lim_{t\to+\infty} a^j_k(t)=0$ or $\lim_{t\to+\infty} a^j_k(t)=+\infty$. 
The claim that minimal tori bifurcating at different values of $a\in\mathfrak b$ are pairwise noncongruent follows from Proposition~\ref{prop:disjoint}, and embeddedness of those bifurcating at $a^1_1=\frac{1}{\sqrt3}$ follows from Proposition~\ref{prop:fk-zeros} (ii).
\end{proof}

\begin{proof}[Proof of Corollary]
The $\{1\}\times\S^1$-invariant immersed minimal tori in $\Ss^3_a$ given by the preimage of the geodesic $\gamma_{a,s}$ with $(a,s)\in\mathcal B_{(j,k)}\setminus \{(a^j_k,0)\}$ are not congruent to the Clifford torus $\Sigma_a$ by Proposition~\ref{prop:fk-zeros}. If $j=k=1$, these tori are embedded and hence $\lim_{t\to+\infty} a^1_1(t)=0$, for otherwise $\lim_{t\to+\infty} a^1_1(t)=+\infty$ so $a^1_1(t_*)=1$ for some $t_*>0$ in contradiction to the fact that the unique embedded minimal torus in the round sphere $\Ss^3_1$ is the Clifford torus~\cite{brendle}. The branches $\mathcal B_{(j,k)}$ are pairwise disjoint by Proposition~\ref{prop:disjoint}, so $\lim_{t\to+\infty} a^j_k(t)=0$ for all $0<j<k$, which implies that every ellipsoid $\Ss^3_a$ with $a<a^1_1=\frac{1}{\sqrt3}$ contains infinitely many pairwise noncongruent $\{1\}\times\S^1$-invariant immersed minimal tori, including the embedded minimal torus given by the preimage of $\gamma_{a^1_1(t),s^1_1(t)}$. For $k<j<2k$, either $\lim_{t\to+\infty} a^j_k(t)=0$ for all such $j,k$, hence there are infinitely many pairwise noncongruent $\{1\}\times\S^1$-invariant immersed minimal tori in $\Ss^3_a$ for all $a>0$, or there exists a smallest $j_+/k_+\in (1,2)\cap\mathds Q$ such that $\lim_{t\to+\infty} a^{j_+}_{k_+}(t)=+\infty$, see Figure~\ref{fig:branches}. In that case, Proposition~\ref{prop:disjoint} implies $\lim_{t\to+\infty} a^{j}_{k}(t)=+\infty$ for all $j,k$ with $a^j_k>a^{j_+}_{k_+}>\frac{1}{\sqrt3}$, so the same conclusion holds except possibly at $a=a^{j_+}_{k_+}$.
\end{proof}

\begin{figure}[!ht]
\begin{tikzpicture}[scale=0.85]
    \draw [->,thick,-latex] (0,-2.5) -- (0,2.5) node [right] {$s$};
    \draw [->,thick,-latex] (0,0) -- (11.35,0) node [right] {$a$};
    
    \draw (0,2) node [left] {$1$};
    \draw (0,0) node [left] {$0$};
    \draw (0,-2) node [left] {$-1$};
    \draw [-,thick,dashed] (0,-2) -- (11,-2);
    \draw [-,thick,dashed] (0,2) -- (11,2);

    \draw (10.5,0.3) node {$\color{red}\mathcal B_\triv$};
    \draw [-,thick,red] (0,0) -- (11,0);

    \draw (9.5,1.5) node {$\{(a^j_k(t),s^j_k(t))\}\subset\mathcal B_{(j,k)}$};
    \draw (0.5,0) node [above] {$\cdots$};
    \foreach \x in {10,...,90}
    {\draw [-] (0.1+0.1*\x,0) to[out=90,in=-0.2*\x,looseness=0.5] (0,1.99);
    \draw [-] (0.1+0.1*\x,0) to[out=-90,in=0.2*\x,looseness=0.5] (0,-1.99);}

    \draw [-,blue,thick] (0.1+0.1*40,0) to[out=90,in=-0.2*40,looseness=0.5] (0,1.99);
    \draw [-,blue,thick] (0.1+0.1*40,0) to[out=-90,in=0.2*40,looseness=0.5] (0,-1.99);

    \draw (9.11,0) node {$_|$};
    \draw (9.3,0) node [below] {$a^j_k$};
    \draw (9.5,0) node [above] {$\cdots$};
\end{tikzpicture}

\begin{tikzpicture}[scale=0.85]
    \draw [->,thick,-latex] (0,-2.5) -- (0,2.5) node [right] {$s$};
    \draw [->,thick,-latex] (0,0) -- (11.35,0) node [right] {$a$};
    
    \draw (0,2) node [left] {$1$};
    \draw (0,0) node [left] {$0$};
    \draw (0,-2) node [left] {$-1$};
    \draw [-,thick,dashed] (0,-2) -- (11,-2);
    \draw [-,thick,dashed] (0,2) -- (11,2);

    \draw (11,0.3) node {$\color{red}\Btriv$};
    \draw [-,thick,red] (0,0) -- (11,0);

    \draw (0.5,0) node [below] {$\cdots$};
    \foreach \x in {10,...,50}
    {\draw [-] (0.1+0.1*\x,0) to[out=90,in=-0.2*\x,looseness=0.7] (0,1.99);
    \draw [-] (0.1+0.1*\x,0) to[out=-90,in=0.2*\x,looseness=0.7] (0,-1.99);}

    \draw [-,blue,thick] (0.1+0.1*40,0) to[out=90,in=-0.2*40,looseness=0.7] (0,1.99);
    \draw [-,blue,thick] (0.1+0.1*40,0) to[out=-90,in=0.2*40,looseness=0.7] (0,-1.99);

    \foreach \x in {0,...,48}
    {\draw [-] (5.15+0.1*\x,0) to[out=90,in=186+0.3*\x,looseness=0.7] (11,1.75-0.02*\x);
    \draw [-] (5.15+0.1*\x,0) to[out=-90,in=174-0.3*\x,looseness=0.7] (11,-1.75+0.02*\x);}
    \draw (10.75,0) node [below] {$\cdots$};
    \draw (5.14,0) node {$_|$};
    \draw (5.14,-0.4) node [below] {$a^{j_+}_{k_+}$};
\end{tikzpicture}
\caption{Schematic illustration depicting qualitative behavior of bifurcation branches $\mathcal B_{(j,k)}$ for small $k$, including $\mathcal B_{(1,1)}$ in blue, according to whether $\lim_{t\to+\infty} a^j_k(t)=0$ for all $j,k$ (top) or not (bottom). The expected (conjectural) behavior is the top illustration. 
These images are not the result of any numerical experiment.}\label{fig:branches}
\end{figure}

\begin{remark}
If $a<\frac{1}{\sqrt3}$, then the vertical line $\{a\}\times (-1,1)$ intersects $\mathcal B_{(1,1)}$ in at least two points $(a,s_\pm)$, where $s_-<0<s_+$, but we cannot exclude the possibility that the corresponding embedded minimal tori in $\Ss^3_a$ are congruent. In fact, if $(a,s_-)=\iota_1(a,s_+)$, then $\gamma_{a,s_\pm}$ are congruent via the reflection $\tau_{\frac\pi2}$, see the discussion preceding Proposition~\ref{prop:pm}.
\end{remark}

\begin{remark}
While $a=1$ is a barrier for the bifurcation branch $\mathcal B_{(1,1)}$ of embedded minimal tori in $\Ss^3_a$, it is not a barrier for $\mathcal B_{(j,k)}$, $(j,k)\neq (1,1)$. In fact, infinitely many such branches cross $a=1$ as a consequence of Remark~\ref{rem:round}. By a result of Brendle~\cite{brendle-mrl}, all Alexandrov immersed minimal tori in the round sphere $\Ss^3_1$ are rotationally invariant. There are infinitely many such tori, known as \emph{\^Otsuki tori}, first described in \cite{otsuki}, see also \cite[Thm.~8]{hsiang-lawson} or \cite[Thm.~1.4]{brendle-survey}, and all must be congruent to tori in some $\mathcal B_{(j,k)}$ by Remark~\ref{rem:allinbranches}.
\end{remark}

\begin{remark}
    For odd $k\in\N$, the minimal tori in $\mathcal B_{(j,k)}$ are invariant under the isometry of $\Ss^3_a$ given by $(z,w)\mapsto (\bar z,w)$, so they yield free boundary immersed minimal annuli in the ellipsoidal hemispheres $(\Ss^3_a)^\pm =\{(z,w)\in \Ss^3_a : \pm\operatorname{Im} z\geq 0\}$. Similar free boundary minimal annuli have been recently constructed inside geodesic balls of $\Ss^3_1$ in \cite{cerezo-fernandez-mira} and inside ellipsoidal domains in $\R^3$ in \cite{schulz}.
\end{remark}

\providecommand{\bysame}{\leavevmode\hbox to3em{\hrulefill}\thinspace}
\providecommand{\MR}{\relax\ifhmode\unskip\space\fi MR }
\providecommand{\MRhref}[2]{%
  \href{http://www.ams.org/mathscinet-getitem?mr=#1}{#2}
}
\providecommand{\href}[2]{#2}

\end{document}